\newtheorem{theorem}{Theorem}
\newtheorem{lemma}[theorem]{Lemma}
\newtheorem{claim}[theorem]{Claim}
\newtheorem{proposition}[theorem]{Proposition}
\theoremstyle{definition}
\newtheorem{definition}[theorem]{Definition}
\newtheorem{remark}[theorem]{Remark}
\newcommand{\F}{\mathbb{F}}
\newcommand{\M}{\mathcal{M}}
\renewcommand{\ge}{\geqslant}
\renewcommand{\le}{\leqslant}
\renewcommand{\angle}[1]{\langle #1 \rangle}
\newcommand{\sympl}[1]{\langle #1 \rangle}
\title{\textbf{Recognizing the Commuting Graph of a Finite
    Group}\thanks{This work was initiated during the online series of
    Research Seminars on ``Groups and Graphs'', March-August, 2021,
    run by Ambat Vijaykumar and Aparna Lakshmanan, Cochin Univ of
    Science and Technology.}}
\date{}
\author{V. Arvind\thanks{Institute of Mathematical Sciences (HBNI),
    Chennai, India. {\tt arvind@imsc.res.in}} \and Peter Cameron
  \thanks{Mathematics Department, University of St. Andrews, UK.  {\tt
      pjc20@st-andrews.ac.uk}.}}
\begin{document}
\maketitle
\thispagestyle{empty}

\begin{abstract}
In this paper we study the realizability question for commuting graphs
of finite groups: Given an undirected graph $X$ is it the commuting
graph of a group $G$? And if so, to determine such a group. We seek
efficient algorithms for this problem. We make some general
observations on this problem, and obtain a polynomial-time algorithm
for the case of extraspecial groups.
\end{abstract}

\section{Introduction}

The commuting graph $\Gamma(G)$ of a finite group $G$ is a simple
undirected graph with vertex set $G$ and undirected edges $(x,y)$ for
each pair of commuting elements $x\ne y\in G$. There are variations of
this definition in the literature. For example, often the center
$Z(G)$ is removed from the graph, because vertices in $Z(G)$ are
adjacent to every vertex. 

The commuting graph of a group has been a topic of research for over
sixty years with a variety of results about properties of the
commuting graph. The earliest reference to commuting graphs, arguably,
is the seminal paper of Brauer and Fowler on centralizers of
involutions in simple groups \cite{BF55}, where it is used though not
explicitly defined.

Our focus is on the recognition problem of commuting graphs. It is an
algorithmic problem: given an undirected graph $X=(V,E)$ as input, we
want to check if there is a group $G$ with $|V|$ elements such that
$X$ is isomorphic to $\Gamma(G)$. Our main results are:

\begin{itemize}
\item A deterministic polynomial time algorithm for the case of
  extraspecial $p$-groups (which are a special case of $p$-groups of
  nilpotence class $2$). 
\item A quasipolynomial time algorithm in the general case, based on
  short (i.e., $O(\log^3n)$ size) presentations for finite groups of
  order $n$ combined with Babai's quasipolynomial time algorithm for
  graph isomorphism.  
\end{itemize}

A natural question in connection with our algorithm for recognizing
the commuting graphs of extraspecial groups is whether groups with the
same commuting graphs are isoclinic. This holds for extraspecial
groups which is exploited by the algorithm, and it is natural to
conjecture that this property holds for all groups of nilpotence class
2 (of which extraspecial groups are a subclass). We present
counter-examples for class-3 nilpotent groups and conjecture that the
property holds for class-2 nilpotent groups.

Additionally, we have some other observations: an efficient reduction
of the problem to recognizing the commuting graphs of indecomposable
groups, recognizing the commuting graph of dihedral groups along with
a generalization to Frobenius groups.

\noindent\textbf{Some related work.}~ There is a result by Giudici and
Kuzma \cite{GK16} that shows the following: every $n$-vertex graph $X$
with at least two vertices of degree $n-1$ is realizable as the
commuting graph of a semigroup. It is easy to see that their
construction actually gives a polynomial-time algorithm for finding a
semigroup with $n$ elements and a bijection from it to the vertex set
of $X$ such that the edges of $X$ realize the commuting relation of
the semigroup. It is a nearly complete answer to the question in the
semigroups setting.

 Solomon and Woldar \cite{SW13} have shown that the commuting graph
 $\Gamma(G)$ of a finite simple group $G$ is uniquely determined by
 the group. That is, $\Gamma(H)\simeq \Gamma(G)$ if and only if
 $G\simeq H$. We believe that checking if $X$ is the commuting graph
 of a simple group should be possible in polynomial time.

\section{Basic properties}

We being with some preliminary observations that are well-known in the
literature (see, e.g., the survey \cite{GGsurvey}).

Let $G$ be a finite group. What are the cliques of $\Gamma(G)$? If a
vertex subset $S$ induces a clique in $\Gamma(G)$ then $S$ is a
commuting subset of elements of $G$. Conversely, every commuting
subset of elements of $G$ forms a clique in $\Gamma(G)$. Which cliques
of $\Gamma(G)$ correspond to subgroups of $G$? Although we cannot
directly infer the group multiplication from $\Gamma(G)$, we can
observe that

\begin{lemma}
  A vertex subset $S$ is a \emph{maximal} clique of $X$ iff $S$ is a
  maximal abelian subgroup of $G$.
\end{lemma}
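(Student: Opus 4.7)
The plan is to prove both implications by exploiting the basic fact that commutation is preserved under the group operations, together with the fact that $\Gamma(G)$ in this paper is defined on the full vertex set $G$ (so the identity and the rest of $Z(G)$ are available as vertices).

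For the forward direction, suppose $S$ is a maximal clique of $\Gamma(G)=X$. Since adjacency means commutation, any two elements of $S$ commute. I would first argue that $S$ must in fact be a subgroup: the identity $e$ commutes with every element of $G$, and in particular with every element of $S\setminus\{e\}$, so either $e\in S$ already or $S\cup\{e\}$ is a strictly larger clique, contradicting maximality. The same type of argument handles inverses (if $x\in S$, then $x^{-1}$ commutes with everything $x$ commutes with) and products (if $x,y\in S$ and $z\in S$, then $z(xy)=(xy)z$ follows from the three pairwise commutations), using maximality each time to force membership in $S$. Hence $S$ is closed under inversion and multiplication and contains $e$, so it is a subgroup, and by construction it is abelian.

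To upgrade abelian to \emph{maximal} abelian, I would argue contrapositively: if $A\supsetneq S$ were an abelian subgroup of $G$, then all elements of $A$ pairwise commute, so $A$ would induce a clique in $\Gamma(G)$ strictly larger than $S$, again contradicting maximality of $S$ as a clique.

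The reverse direction is the easier half. If $S$ is a maximal abelian subgroup, then any two distinct elements of $S$ commute, so $S$ is a clique. If $S$ were not maximal as a clique, there would exist $y\in G\setminus S$ adjacent to every element of $S$, i.e., commuting with every element of $S$; then $\langle S\cup\{y\}\rangle$ is an abelian subgroup strictly containing $S$ (it is generated by pairwise commuting elements), contradicting maximality of $S$ as an abelian subgroup.

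The main delicate point is the subgroup-closure step in the forward direction: one has to be careful that the extension argument ``$S\cup\{g\}$ is still a clique'' really applies, which needs $g\neq x$ for all $x\in S$, a case distinction which is handled trivially since $g\in S$ already gives what we want. Once this bookkeeping is in place, the rest is routine.
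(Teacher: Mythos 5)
Your proof is correct and takes essentially the same route as the paper, which argues the "maximal abelian subgroup implies maximal clique" direction explicitly and dismisses the converse with "by a similar argument"; your element-by-element closure check (identity, inverses, products, each forced into $S$ by clique-maximality) is precisely the content that phrase suppresses. The only simplification worth noting is that the closure step can be compressed: $S$ is a pairwise commuting set, so $\langle S\rangle$ is an abelian subgroup and hence a clique containing $S$, whence $S=\langle S\rangle$ by maximality.
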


\begin{proof}
  Suppose $H$ is a maximal abelian subgroup of $G$. Clearly, $H$ is a
  clique in $X$. If $x\notin H$ is adjacent to all of $H$ then $x$
  commutes with all of $H$ implying that $\angle{H\cup\{x\}}$ is an
  abelian subgroup of $G$ strictly larger than $H$. Hence the clique
  induced by $H$ is maximal.  Conversely, by a similar argument, if
  $S$ is a maximal clique in $X$ then $S$ is a maximal abelian
  subgroup of $G$.
\end{proof}

\subsection*{The vertex degrees of $\Gamma(G)$ and conjugacy classes of $G$}

Let $X=(G,E)$ be the commuting graph of a finite group $G$.  Let $x^G$
denote the \emph{conjugacy class} of $x\in G$:
\[
x^G = \{g^{-1}xg\mid g\in G\},
\]
which is the orbit of $x$ under the conjugation action of $G$ on
itself. Let $\deg(x)$ denote the degree of a node $x$ in the graph
$X$. The closed neighborhood $\bar{N}(v)$ of any vertex $v$ of the
commuting graph $\Gamma(G)$ is defined as
\[
\bar{N}(x) = \{u\in G\mid u=v \text{ or } (u,v)\in E\}.
\]
The orbit-stabilizer lemma \cite{permgps} directly implies the
following

\begin{proposition}
  For each $x\in G$ its centralizer $C_G(x)$ is the closed neighborhood
  $\bar{N}(x)$ of $x$ in $\Gamma(G)$, and
  \[
|C_G(x)| = 1+ \deg(x) = {\frac{|G|}{|x^G|}}, \textrm{ for all }x\in G.
  \]
\end{proposition}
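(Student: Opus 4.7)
The plan is to verify the two claims in the proposition — namely that $C_G(x)=\bar{N}(x)$, and that $|C_G(x)|=1+\deg(x)=|G|/|x^G|$ — by unpacking definitions and then invoking the orbit-stabilizer lemma as the only nontrivial ingredient.

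First I would prove the set-theoretic identity $C_G(x)=\bar{N}(x)$ directly from the definitions. By definition of the commuting graph, a vertex $u\ne x$ is adjacent to $x$ in $\Gamma(G)$ precisely when $ux=xu$, so the open neighborhood $N(x)$ is $\{u\in G\setminus\{x\}:ux=xu\}$. The closed neighborhood then adds $x$ itself (which trivially commutes with $x$), giving $\bar{N}(x)=\{u\in G:ux=xu\}$, and this is exactly the defining description of the centralizer $C_G(x)$. The first equality $|C_G(x)|=1+\deg(x)$ is then immediate, since $\bar{N}(x)=\{x\}\cup N(x)$ is a disjoint union and $\deg(x)=|N(x)|$.

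For the second equality I would apply the orbit–stabilizer lemma to the conjugation action of $G$ on itself, defined by $g\cdot y = g^{-1}yg$. Under this action, the orbit of $x$ is by definition $x^G$, and the stabilizer of $x$ is $\{g\in G:g^{-1}xg=x\}=\{g\in G:xg=gx\}=C_G(x)$. The orbit-stabilizer lemma therefore yields $|G|=|x^G|\cdot|C_G(x)|$, and rearranging gives $|C_G(x)|=|G|/|x^G|$, as required.

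There is no real obstacle here: the proposition is essentially a dictionary between the graph-theoretic notion of closed neighborhood and the group-theoretic notion of centralizer, combined with the standard orbit–stabilizer count of a conjugacy class. The only step that could possibly be considered delicate is making sure that $x$ itself is included in $\bar{N}(x)$ (matching the fact that $x\in C_G(x)$), so that the counting identity $|C_G(x)|=1+\deg(x)$ comes out with the correct ``$+1$''; this is handled cleanly by using the closed (rather than open) neighborhood throughout.
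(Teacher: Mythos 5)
Your proof is correct and matches the paper's approach: the paper simply cites the orbit--stabilizer lemma as directly implying the proposition, and your write-up fills in exactly the intended details (closed neighborhood equals centralizer by definition, then orbit--stabilizer applied to the conjugation action).
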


Let $m$ denote the number of edges in the commuting graph $\Gamma(G)$,
and let $k$ denote the number of conjugacy classes.  As $\sum_{x\in
  G}\deg(x) = 2m$, we have:

\begin{equation}
2m +n = \sum_{x\in G} {\frac{|G|}{|x^G|}} = |G|\cdot \sum_{x\in
  G}{\frac{1}{|x^G|}} = n\cdot k.
\end{equation}

Thus the number of conjugacy classes of $G$ is $k=(2m+n)/n$, which, by
the above equation, can be inferred from the commuting graph. Thus,
for example, the only regular commuting graphs are the complete
graphs, which are the commuting graphs of abelian groups.

The problem of the minimum number $f(n)$ of conjugacy classes in a group
of order~$n$ has a long history. Landau~\cite{La} showed that $f(n)\to\infty$
as $n\to\infty$. The first lower bound was by Brauer~\cite{Brauer} and
Erd\H{o}s and Tur\'an~\cite{ET}, who showed that $f(n)\ge\log\log n$
(logarithms to base~$2$). This was improved to $\epsilon\log n/(\log\log n)^8$
by Laci Pyber~\cite{Py}. The exponent $8$ was reduced to $7$ by Thomas Keller
\cite{Keller}, and to $3+\epsilon$ by Barbara Baumeister, Attila Mar\'oti and
Hung Tong-Viet~\cite{BMT-V}. It is conjectured that a bound of the form
$f(n)\ge C\log n$ holds for some constant $C$. In the other direction, 
$f(n)\le(\log n)^3$.

This is relevant to us because an $n$-vertex graph $X$ with $o(f(n))$
edges cannot be the commuting graph of an $n$ element group.

At the other extreme, we can rule out very dense incomplete graphs by
the $5/8$-theorem \cite{Gu73} for finite groups: any graph $X$ that is
not complete and has more than $5/8\cdot n^2$ edges cannot be the
commuting graph of an $n$-element group.

\subsection*{The commuting graph and maximal abelian subgroups}

For a finite group $G$, let $\M\subseteq 2^G$ denote the set of all
\emph{maximal abelian subgroups} of $G$. Associated with $G$ is the
natural \emph{hypergraph} $(G,\M)$, where the hyperedges are precisely
the maximal abelian subgroups of $G$.

\begin{proposition}\label{max-abel-subgps}
  The commuting graph of a finite group determines the hypergraph of
  its maximal abelian subgroups, and, conversely the maximal abelian
  subgroups hypergraph of the group determines its commuting graph.
\end{proposition}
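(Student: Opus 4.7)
The plan is to prove each direction separately, using the previous lemma (maximal cliques of $\Gamma(G)$ coincide with maximal abelian subgroups) as the central tool.

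For the forward direction, given $\Gamma(G)$, I would enumerate its maximal cliques. By the previous lemma each maximal clique is precisely a maximal abelian subgroup of $G$, and vice versa, so the family of maximal cliques of $\Gamma(G)$ equals $\M$. Since the vertex set is already $G$, this recovers the hypergraph $(G,\M)$ exactly.

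For the converse, given the hypergraph $(G,\M)$, I would define a graph $X'$ on vertex set $G$ by putting an edge between distinct $x,y \in G$ iff there exists some $M \in \M$ with $\{x,y\} \subseteq M$. I then need to check $X' = \Gamma(G)$. One inclusion is immediate: if $x,y$ both lie in a common abelian subgroup $M$, then $xy = yx$, so $(x,y)$ is an edge of $\Gamma(G)$. For the other inclusion, suppose $x \ne y$ commute. Then $\langle x, y\rangle$ is an abelian subgroup of $G$. Since $G$ is finite, one can extend $\langle x,y\rangle$ to a maximal abelian subgroup $M \in \M$ (take any abelian subgroup containing it of largest order), and then $\{x,y\} \subseteq M$, so $(x,y)$ is an edge of $X'$.

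The main step of substance is the extension argument in the converse direction, namely that every pair of commuting elements lies inside some maximal abelian subgroup; but in finite groups this is just picking a maximum-size abelian subgroup containing $\langle x,y\rangle$, so there is no real obstacle. Together the two directions yield the claimed mutual determination.
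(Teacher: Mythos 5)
Your proof is correct and follows essentially the same route as the paper: maximal cliques of $\Gamma(G)$ are exactly the maximal abelian subgroups (giving the forward direction), and the edge set recovered from the hypergraph is $\{\{u,v\}\mid u,v\in M\text{ for some }M\in\M\}$ (giving the converse). The only difference is that you spell out the extension of $\langle x,y\rangle$ to a maximal abelian subgroup, which the paper leaves as ``clearly''; that added detail is accurate and harmless.
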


\begin{proof}
  Let $G$ be a finite group. Clearly, from the commuting graph
  $\Gamma(G)$ we can determine all the maximal cliques which
  corresponds to all maximal abelian subgroups of $G$ which implies
  that the hypergraph of maximal abelian subgroups is determined by
  $\Gamma(G)$. Conversely, given the hypergraph $(G,\M)$ we define the
  edge set $E=\{\{u,v\}\mid u,v\in A\text{ for some }A\in\M\}$.
Clearly, $E$ is the edge set of the commuting graph $\Gamma(G)$.
\end{proof}

\begin{remark}
Since the commuting graph $\Gamma(G)$ of a finite simple group $G$ is
uniquely determined \cite{SW13}, by Proposition~\ref{max-abel-subgps}
it follows that the set of maximal abelian groups of a finite simple
group $G$ uniquely determines $G$.

For a finite group $G$, the number of maximal abelian subgroups is
bounded by ${|G|}\choose {\log|G|}$ because every subgroup of $G$ has
a generating set of size bounded by $\log|G|$. Thus, the hypergraph of
maximal abelian subgroups has size at most $n^{\log n}$ for $n$
element groups. 

This simple bound is tight apart from a constant in the exponent. To see this,
consider the extraspecial group $G$ of order $p^{2n+1}$ and exponent $p$,
where $p$ is an odd prime. The centre has order $p$, and $G/Z(G)$ is isomorphic
to a $2n$-dimensional vector space $V$ over the field $F$ of $p$ elements, with
the bilinear form from $V\times V$ to $F$ corresponding to the commutation map
from $G/Z(G)\times G/Z(G)$ to $Z(G)$. Maximal abelian subgroups contain the
centre, and correspond to maximal totally isotropic subspaces of $V$. It is
known that the number of such subspaces is $\prod_{i=1}^n(p^i+1)$ (see
\cite{Tay}), which is greater than $p^{n(n+1)}/2$, roughly $|G|^{n/4}$.
\end{remark}

\section{Commuting graphs of product groups}

Let $G$ and $H$ be finite groups. We now consider the commuting graph
$\Gamma(G\times H)$ of their direct product.

Let $X=(V,E)$ and $X'=(V',E')$ be simple undirected graphs. Recall
\cite{IK08book} that the \emph{strong product} of the graphs $X$ and
$X'$, denoted $X\boxtimes X'$ is a simple undirected graph with the
cartesian product $V\times V'$ as its vertex set and edges defined as
follows: distinct pairs $(u,u')$ and $(v,v')$ are adjacent if and only
if one of the following holds:
\begin{itemize}
\item $u=u'$ and $(v,v')\in E'$,
\item $v=v'$ and $(u,u')\in E$,
\item $(u,u')\in E$ and $(v,v')\in E'$.
\end{itemize}

The following proposition is immediate from the definition.

\begin{proposition}
  For finite groups $G$ and $H$
  \[
  \Gamma(G\times H) = \Gamma(G)\boxtimes \Gamma(H).
  \]
\end{proposition}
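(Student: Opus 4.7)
The plan is to verify that the two graphs agree as sets of vertices and as sets of edges. Both $\Gamma(G\times H)$ and $\Gamma(G)\boxtimes\Gamma(H)$ have vertex set $G\times H$ by definition, so the only real content lies in matching the edge relations through a short case analysis.

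First I would recall the key fact from the direct product structure: two elements $(g_1,h_1)$ and $(g_2,h_2)$ commute in $G\times H$ if and only if $g_1g_2=g_2g_1$ in $G$ and $h_1h_2=h_2h_1$ in $H$. Therefore the pair is adjacent in $\Gamma(G\times H)$ precisely when the two pairs are distinct and both of these coordinate-wise commutation relations hold.

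Next I would split on which coordinates are equal. Since the pairs are distinct, at most one of $g_1=g_2$ and $h_1=h_2$ can occur. In the case $g_1=g_2$, the coordinate-wise commutation reduces to $h_1\ne h_2$ together with $\{h_1,h_2\}\in E(\Gamma(H))$, which is exactly the first clause in the definition of the strong product. The case $h_1=h_2$ matches the second clause symmetrically. The remaining case, where $g_1\ne g_2$ and $h_1\ne h_2$, matches the third clause, since coordinate-wise commutation is then equivalent to $\{g_1,g_2\}\in E(\Gamma(G))$ and $\{h_1,h_2\}\in E(\Gamma(H))$. Running these implications in both directions yields the claimed equality of graphs.

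There is no real obstacle: the entire argument is a direct unfolding of the definitions together with the observation that commutation in a direct product is coordinate-wise. The only minor care needed is to separate the ``distinct vertex'' requirement from the coordinate-wise adjacency, which is what forces the three-clause disjunction in the definition of the strong product in the first place.
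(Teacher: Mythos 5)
Your proof is correct and is precisely the routine unfolding of definitions that the paper has in mind when it declares the proposition ``immediate from the definition'' (the paper gives no written proof). The key observations --- commutation in a direct product is coordinate-wise, and distinctness of the two vertices forces the three-clause case split of the strong product --- are exactly right.
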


Since simple undirected graphs can be uniquely factorized into strong
products of prime graphs \cite{IK08book}, which can be computed in
polynomial time \cite{FeSch92}, we can derive the following
reduction. Recall the a group $G$ is said to be \emph{indecomposable}
if it is not the direct product of two non-trivial groups.

\begin{theorem}\label{product-thm}
  The problem of recognizing the commuting graphs of groups is
  polynomial-time reducible to the problem of recognizing the
  commuting graphs of indecomposable groups.
\end{theorem}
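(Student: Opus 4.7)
The plan is to exploit the unique prime factorization of finite graphs under the strong product, which is computable in polynomial time by Feigenbaum and Sch\"affer~\cite{FeSch92}. Given the input graph $X$ on $n$ vertices, I would first compute its strong-product prime factorization
\[
X \;=\; P_1 \boxtimes P_2 \boxtimes \cdots \boxtimes P_k.
\]
Since every prime factor has at least two vertices, $k \le \log_2 n$.

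The key observation is that if $X = \Gamma(G)$ for some finite group $G$, and we write $G = G_1 \times \cdots \times G_m$ with each $G_i$ indecomposable, then by the preceding proposition $X = \Gamma(G_1) \boxtimes \cdots \boxtimes \Gamma(G_m)$. Refining each $\Gamma(G_i)$ further into strong-product primes and invoking uniqueness of the factorization, the multiset of primes so obtained must coincide with $\{P_1,\ldots,P_k\}$. Hence $X$ is a commuting graph of a group iff there exists a partition $\{1,\ldots,k\} = B_1 \sqcup \cdots \sqcup B_m$ such that each $Y_{B_i} := \boxtimes_{j \in B_i} P_j$ is the commuting graph of an indecomposable group; the latter tests are exactly calls to the assumed oracle.

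To locate such a partition efficiently I would use dynamic programming over subsets of $\{1,\ldots,k\}$. Let $g(B) = 1$ iff the oracle accepts $Y_B$, and $f(S) = 1$ iff $S$ admits a partition into blocks $B$ with $g(B) = 1$. With $j_0$ denoting the smallest index of $S$, the recurrence
\[
f(S) \;=\; \bigvee_{\substack{B \subseteq S \\ j_0 \in B}} \bigl(g(B) \wedge f(S \setminus B)\bigr), \qquad f(\emptyset) = 1,
\]
yields the answer at $S = \{1,\ldots,k\}$. The number of oracle calls is at most $2^k \le n$, each on a graph of at most $n$ vertices, and the total dynamic programming cost is $O(3^k) \le O(n^{\log_2 3})$. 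If a realizing group is also sought, standard back-pointers in the DP combine the indecomposable groups returned by the oracle on each chosen block into a direct product.

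I do not expect a deep obstacle: correctness rests entirely on the uniqueness of strong-product prime factorization applied to both sides of $\Gamma(G) = \Gamma(G_1) \boxtimes \cdots \boxtimes \Gamma(G_m)$. The one point requiring care is to avoid enumerating set partitions of the index set directly, whose count (the Bell number $B_k$ with $k = \Theta(\log n)$) would be quasi-polynomial rather than polynomial; the subset DP above circumvents this and keeps the reduction genuinely polynomial-time.
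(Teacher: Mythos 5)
Your proposal is correct and follows essentially the same route as the paper: factor $X$ into strong-product primes via Feigenbaum--Sch\"affer, note $k\le\log_2 n$, query the oracle on each of the $2^k\le n$ subset-products, and assemble a partition by dynamic programming over subsets. The only (minor) difference is that you state the DP recurrence and its $O(3^k)$ cost explicitly, whereas the paper organizes the same computation as a staged marking of subsets as unions of two disjoint true subsets.
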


\begin{proof}
  Suppose we have an algorithm $\mathcal{A}$ for recognizing the
  commuting graphs of indecomposable groups. Using $\mathcal{A}$ as
  subroutine, we present a polynomial-time algorithm for recognizing
  the commuting graphs of all finite groups.
  
  Let $X=(V,E)$ be an undirected graph on $n$ vertices which is a
  purported commuting graph.

  First, using the polynomial-time algorithm of Feigenbaum and
  Sch\"affer we can factorize $X$ as
  \[
  X=X_1\boxtimes X_2\boxtimes\cdots \boxtimes X_k,
  \]
  where each $X_i$ is a prime graph on at least two vertices.  It
  follows that $k\le\log n$. Now, for each subset $S\subseteq [k]$
  of the prime graphs we combine them by taking the strong
  direct product to define the graph
  \[
  X_S =\boxtimes_{i\in S} X_i.
  \]
  Notice that any order in which the strong product of these graphs
  $X_i$ is computed yields the same graph, up to isomorphism.

  Thus, we have computed graphs $X_S$ for each subset $S$ of $[k]$.
  Now, we invoke the subroutine $\mathcal{A}$ that check if $X_S$ is
  the commuting graph of an indecomposable group $G_S$, and if so,
  finds a labeling of the vertices of $X_S$ with elements of $G_S$
  consistent with the commuting relations.

  We can now check if the input graph $X$ is the commuting graph of a
  group with a straightforward dynamic programming strategy based on
  the following easy claim.

\begin{claim}  
  For any two disjoint subsets $S,S'$ of $[k]$ such that $X_S$ and
  $X_{S'}$ are the commuting graphs of groups $G_S$ and $G_{S'}$ the
  graph $X_S\boxtimes X_{S'}$ is the commuting graph of the direct
  product group $G_S\times G_{S'}$.
\end{claim} 

Now, the algorithm works in stages, computing subsets $S$ of $[k]$
along with the graph $X_S$ and group $G_S$ such that
$X_S=\Gamma(G_S)$. 
\begin{enumerate}
\item \textbf{for} stages $0$ \textbf{to} $k$ \textbf{do}
\item \textbf{Stage $0$}~~ we have subsets $S$ such that $G_S$ is an
  indecomposable group. We mark all such subsets $S$ as true. We mark
  the remaining subsets as false.
\item \textbf{Stage $i+1$}~~ For each pair of disjoint subsets $S$ and
  $S'$ marked true in Stages $1,2,\ldots,i$, such that $S\cup S'$ is
  marked false, we mark $S\cup S'$ true and compute $X_{S\cup
    S'}=X_S\boxtimes X_{S'}$ and $G_{S\cup S'}=G_S\times G_{S'}$.
\item \textbf{end-for}
\item If $[k]$ is marked true then the input $X$ is the commuting
  graph of the group $G_{[k]}$ computed above.
\end{enumerate}

The above description checks if $X$ is the commuting graph of a group
with at most $2^k\le n$ calls to the subroutine $\mathcal{A}$ and the
running time of the remaining computation is clearly polynomially
bounded in $n$.
\end{proof}

\begin{remark}
 From Theorem~\ref{product-thm} we can easily deduce that the Solomon
 Woldar theorem \cite{SW13} implies that the direct product of simple
 groups too have uniquely determined commuting graphs. In fact, the
algorithm can be simplified in this case; the following result shows that
we only need to consider the indecomposable factors, not arbitrary sums of them.
\end{remark}  

\begin{proposition}
The commuting graph of a finite simple group is a prime graph under the 
strong product.
\end{proposition}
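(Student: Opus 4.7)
The plan is to argue by contradiction: assume there is an isomorphism $\Gamma(G)\cong X_1\boxtimes X_2$ with $a:=|V(X_1)|\ge 2$ and $b:=|V(X_2)|\ge 2$, and extract from it a nontrivial internal direct product decomposition of $G$ itself, contradicting simplicity. The abelian case (so $G$ is cyclic of prime order $p$) is disposed of at once: then $\Gamma(G)=K_p$, and since $K_a\boxtimes K_b=K_{ab}$ one would need $p=ab$ with $a,b\ge 2$, which is impossible.

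Assume then that $G$ is nonabelian simple, so $Z(G)=1$ and the identity $e$ is the only universal (i.e.\ of degree $n-1$) vertex of $\Gamma(G)$. A direct check shows that the universal vertices of a strong product are exactly the pairs of universal vertices of the factors, so $X_1$ and $X_2$ each have a unique universal vertex, say $u_0\in V(X_1)$ and $v_0\in V(X_2)$, and $e=(u_0,v_0)$ under the identification $V(\Gamma(G))=V(X_1)\times V(X_2)$. I then single out the two coordinate fibers through the identity: $F_1:=V(X_1)\times\{v_0\}$ of size $a$ and $F_2:=\{u_0\}\times V(X_2)$ of size $b$, meeting precisely in $\{e\}$.

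The main step, which I expect to be the heart of the argument, is to show that $F_1$ and $F_2$ are \emph{subgroups} of $G$ by identifying each with a centralizer: $F_2=C_G(F_1)$ and $F_1=C_G(F_2)$. Universality of $u_0$ and $v_0$ makes every element of $F_1$ adjacent (hence commuting with) every element of $F_2$, by a short case analysis on the three strong-product clauses, giving $F_2\subseteq C_G(F_1)$. For the reverse inclusion, a vertex $(u,v)$ adjacent to every $(u',v_0)\in F_1$ must, using again that $v_0$ is universal in $X_2$, satisfy $u\in \bar N_{X_1}(u')$ for every $u'\in V(X_1)$; this forces $u$ to be universal in $X_1$, hence $u=u_0$, so $(u,v)\in F_2$. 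The argument for $F_1=C_G(F_2)$ is symmetric.

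With $F_1,F_2$ both subgroups, of orders $a$ and $b$, intersecting trivially, elementwise commuting, and satisfying $|F_1|\cdot|F_2|=ab=|G|$, the product $F_1F_2$ is all of $G$, and elementwise commutativity yields normality of both factors in a standard way. Therefore $G\cong F_1\times F_2$ as an internal direct product; since $a,b\ge 2$, both factors are proper and nontrivial, contradicting the simplicity of $G$. Apart from the centralizer computation above, the proof is purely a matter of unpacking the strong-product definition and invoking the elementary fact that a simple group admits no nontrivial direct product decomposition.
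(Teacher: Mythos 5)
Your proof is correct, but it takes a genuinely different --- and in fact more elementary --- route than the paper's. The paper invokes the Guralnick--Kantor theorem that every non-identity element of a nonabelian finite simple group lies in a $2$-element generating set; such a generating pair must be non-adjacent with no non-identity common neighbour in $\Gamma(G)$, and a case analysis on a putative product structure manufactures a generating pair violating this. You instead observe that the two coordinate fibres $F_1=V(X_1)\times\{v_0\}$ and $F_2=\{u_0\}\times V(X_2)$ through the identity are each other's centralizers: the key point (which you could state more explicitly) is that a centralizer of an \emph{arbitrary subset} of $G$ is automatically a subgroup, so once the graph computation shows $C_G(F_1)=F_2$ and $C_G(F_2)=F_1$ you get two subgroups of orders $a$ and $b$ with trivial intersection, commuting elementwise, whose product exhausts $G$; this gives a nontrivial internal direct decomposition, so in particular a proper nontrivial normal subgroup. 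I checked the centralizer computation in both subcases ($v\ne v_0$ and $v=v_0$) and it is right, as is the uniqueness of the universal vertex in each factor. What your approach buys: it uses only $Z(G)=1$ and direct indecomposability, with no nontrivial simple-group theory, and it therefore proves the stronger statement that the commuting graph of \emph{any} centerless directly indecomposable group is prime under the strong product --- which subsumes the paper's closing remark extending the result to certain almost simple groups. What it gives up is nothing in this context, though the paper's argument illustrates how the spread property is visible in the commuting graph.
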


\begin{proof}
We use the fact that, if $G$ is a non-abelian simple group and
$g\in G\setminus\{1\}$, then there exists $h\in G$ such that
$\langle g,h\rangle=G$~\cite{GK}. Now if $\langle g,h\rangle=G$, then
\begin{itemize}
\item $g$ and $h$ are nonadjacent in the commuting graph (since $G$
is non-abelian);
\item $g$ and $h$ have no non-identity common neighbour in the commuting
graph (since a common neighbour would belong to $Z(G)$, but $Z(G)=\{1\}$.
\end{itemize}

Now suppose for a contradiction that $\Gamma(G)$ is the strong product
of two nontrivial graphs with vertex sets $A$ and $B$ (i.e., with
$|A|>1$ and $|B|>1$). Then we can identify $G$ with the Cartesian
product $A\times B$. Suppose that $(a,b)$ is the identity of $G$. Then
$(a,b)$ is joined to all other vertices in the commuting graph.

It follows that $a$ is joined to every other vertex in $A$, and $b$ to
every other vertex in $B$; therefore, for all $x\in A\setminus\{a\}$,
$y\in B\setminus\{b\}$, the three vertices $(a,y)$, $(x,b)$ and
$(x,y)$ are adjacent to each other in the commuting graph $\Gamma(G)$.


Choose $y\in B\setminus\{b\}$, and suppose that
$\langle(a,y),(u,v)\rangle=G$. Now
\begin{itemize}
\item if $u=a$ then $(a,y)$ and $(a,v)$ are both joined to $(x,b)$
for any $x\in A\setminus\{a\}$;
\item if $v=b$ then $(a,y)$ and $(u,b)$ are joined;
\item if neither of the above, then $(a,y)$ and $(u,v)$ are both
joined to $(u,b)$.
\end{itemize}
Each case is contradictory; so our assumption that $\Gamma(G)$ is the
strong product of two nontrivial graphs is false, and the theorem is
proved.
\end{proof}

The proof only requires that $Z(G)=1$ and that any non-identity
element is contained in a $2$-element generating set. These
assumptions are valid in any almost simple group $G$ with simple
normal subgroup $S$ such that $G/S$ is cyclic~\cite{BGH}.

\section{Commuting graphs of semidirect products}

In this section we explore whether we can recognize the commuting
graph of semidirect products $G=H\rtimes K$ if $H$ and $K$ are both
from group classes whose commuting graphs are easily recognizable.

For example, consider the commuting graph of the dihedral group $D_n$.
Let $D_n=\angle{a,b}$ where $a^2=1, b^n=1$ and $aba^{-1}=b^{-1}$. For
$n$ odd, $1$ is the only dominant vertex, there is an $n$-clique
corresponding to $\angle{b}$ and the $ab^i$ are pendant vertices. For
$n$ even, it is a bit different with $b^{n/2}$ being the other
dominant vertex.


Seeking a generalization of this example we first consider Frobenius
groups.

\subsection*{Commuting graphs of Frobenius groups}\label{frob-sec}

In this subsection we demonstrate that we can recognise from its
commuting graph that a group $G$ is a Frobenius group.

A finite group $G$ is a \emph{Frobenius group} if it contains a
non-trivial proper subgroup $H$, the \emph{Frobenius complement}, with
the property that $H\cap H^g=\{1\}$ for all $g\notin H$. The theorem
of Frobenius asserts that a Frobenius group has a normal subgroup $N$,
the \emph{Frobenius kernel}, such that $NH=G$ and $N\cap H=\{1\}$;
every non-identity element of $G$ is in either the Frobenius kernel or
a conjugate of the Frobenius complement.  Thompson proved that a
Frobenius kernel is nilpotent, and Zassenhaus worked out the detailed
structure of a Frobenius complement.

An alternative definition is that $G$ is a Frobenius group if it is isomorphic
to a transitive permutation group which is not regular but in which the 
stabilizer of any two points is the identity.

Everything we need about Frobenius groups is contained in Passman's
book~\cite{passman}.

In the commuting graph of a group $G$, the identity is a \emph{dominant}
vertex (that is, joined to all others); indeed, any vertex in the centre
is dominant, so if $Z(G)$ is non-trivial then the commuting graph is
$2$-connected.

\begin{lemma}
A Frobenius kernel has non-trivial centre.
\end{lemma}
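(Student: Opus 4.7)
The plan is to combine two well-known facts, both already available in the surrounding discussion. First, by Thompson's theorem (quoted just above the statement), the Frobenius kernel $N$ of a Frobenius group $G$ is a nilpotent group, and by definition $N$ is non-trivial (since a Frobenius complement $H$ is a proper non-trivial subgroup, and $|G|=|N|\cdot|H|$ with $N\cap H=\{1\}$). Second, it is a standard consequence of nilpotency that any non-trivial nilpotent group has non-trivial centre: the upper central series $\{1\}=Z_0(N)\lhd Z_1(N)\lhd\cdots$ terminates in $N$, so in particular $Z_1(N)=Z(N)\ne\{1\}$ whenever $N\ne\{1\}$.

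Thus the proof is essentially a two-line deduction: invoke Thompson to get nilpotency, then invoke the standard centre-of-nilpotent-group fact. I would state it in exactly this order, making clear that the existence of a non-trivial centre for non-trivial nilpotent groups follows because, if $Z(N)=\{1\}$, then $N$ acts on itself by conjugation with trivial kernel and no fixed points outside $1$, so the class equation forces $|N|\equiv 1\pmod{p}$ for every prime $p\mid |N|$, giving $|N|=1$. For a cleaner presentation I would simply cite the upper central series argument.

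The only non-trivial ingredient is Thompson's theorem that Frobenius kernels are nilpotent, which is the actual substantive content; but since the paper cites Passman's book~\cite{passman} for all required facts about Frobenius groups, this can be used as a black box. Hence no real obstacle arises; the lemma is essentially a corollary of the two quoted facts and sits as a short remark serving the later structural observations about dominant vertices in the commuting graph of a Frobenius group.
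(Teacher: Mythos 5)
Your proof is correct and takes exactly the same route as the paper, which simply cites Thompson's theorem that the Frobenius kernel is nilpotent and leaves the standard fact that a non-trivial nilpotent group has non-trivial centre implicit. One tiny caveat: the class-equation aside you sketch is really the argument for $p$-groups (for a general nilpotent group the non-central class sizes need not share a common prime divisor, so $|N|\equiv 1\pmod p$ does not follow); stick with the upper central series argument you already identify as the cleaner presentation, since $Z_1(N)=Z(N)=\{1\}$ would force the whole upper central series to be trivial, contradicting $N\ne\{1\}$.
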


\begin{proof} By Thompson's theorem, a Frobenius kernel is nilpotent.
\end{proof}

\begin{lemma}
A Frobenius complement has non-trivial centre.
\end{lemma}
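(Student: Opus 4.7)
The plan is to split on the parity of $|H|$, leaning on the structural results for Frobenius complements collected in Passman~\cite{passman}.

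First I would handle the even case. If $|H|$ is even, pick an involution $t\in H$. A classical fact says that an involution acting fixed-point-freely on a finite group inverts every element (and forces the group to be abelian); hence $t$ inverts every element of the Frobenius kernel $N$. If $t'$ were a second involution in $H$, then $t'$ would likewise invert $N$, so the product $tt'$ would act trivially on $N$; by fixed-point-freeness this forces $tt'=1$, so $t'=t$. Thus $H$ has a unique involution, and because conjugation by any $h\in H$ permutes the set of involutions, $hth^{-1}=t$ for all $h$; that is, $t\in Z(H)$.

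For the odd case I would invoke Burnside's theorem that every Sylow subgroup of a Frobenius complement is cyclic, so $H$ is a Z-group and hence metacyclic: $H=\langle a\rangle\rtimes\langle b\rangle$ with $|a|=m$, $|b|=n$, $\gcd(m,n)=1$, and $bab^{-1}=a^r$. If the induced action is trivial, $H$ is abelian (hence cyclic) and $Z(H)=H$. Otherwise, let $d$ be the order of $r$ modulo $m$; then $b^d$ commutes with both $a$ and $b$, so $b^d\in Z(H)$, and the only remaining task is to show $d<n$. The Frobenius complement property that every order-$pq$ subgroup (distinct primes) is cyclic translates, via the semidirect product structure, into $r^{n/q}\equiv 1\pmod p$ for every $p\mid m$ and every $q\mid n$. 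Combined with $\gcd(m,n)=1$, which controls the order of $r$ modulo each prime power of $m$, this forces $d$ to divide $n/\operatorname{rad}(n)$, which is strictly less than $n$.

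The main obstacle I anticipate is this last bound in the odd-order case: the passage from the clean order-$pq$ condition to the strict inequality $d<n$ takes a short but careful calculation involving lifting-the-exponent control over the orders of $r$ modulo prime powers. In a polished write-up it may be cleanest to simply quote the structure theorem for odd-order Frobenius complements from Passman~\cite{passman} rather than reconstruct the bound by hand.
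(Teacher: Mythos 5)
Your proof is correct, and the even-order case is exactly the paper's argument: unique involution (inverting the kernel), hence central. In the odd-order case both you and the paper start from the same two facts --- all Sylow subgroups of $H$ are cyclic (so $H$ is metacyclic of the form $\langle a\rangle\rtimes\langle b\rangle$ with $\gcd(m,n)=1$), and every subgroup of order $pq$ is cyclic --- but you finish differently. The paper locates a normal subgroup $P$ of order $p_1$ (the smallest prime divisor of $|H|$) inside the Fitting subgroup and concludes $P\le Z(H)$ because $|\Aut(P)|=p_1-1$ has only prime divisors smaller than $p_1$; you instead exhibit $b^d$ as a nontrivial central element, where $d$ is the order of $r$ modulo $m$. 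The computation you defer does go through: for each prime $q\mid n$ and $p\mid m$ the cyclicity of $\langle a^{m/p},b^{n/q}\rangle$ gives $r^{n/q}\equiv1\pmod p$, so the order of $r$ mod $p$ divides $\gcd_q(n/q)=n/\mathrm{rad}(n)$; and since the order of $r$ mod $p^k$ is a $p$-power times its order mod $p$ while also dividing $n$ with $\gcd(m,n)=1$, no $p$-part can appear, so $d\mid n/\mathrm{rad}(n)<n$ and $b^d\ne1$. Your route is more computational but entirely self-contained from the metacyclic presentation; the paper's is shorter but leans on the Fitting subgroup containing its centralizer and on the ``normal subgroup of smallest prime order is central'' argument. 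Either is acceptable.
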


\begin{proof} Suppose that $H$ is a Frobenius complement.

If $H$ has even order, then it contains a unique involution (which acts on
the Frobenius kernel as inversion -- so in this case the Frobenius kernel is
abelian). This involution is joined to all other vertices.

If $H$ has odd order, then we use the fact that any subgroup whose order is the
product of two primes is cyclic. So all the Sylow subgroups of $G$ are cyclic.
Suppose that the prime divisors of $|H|$ are $p_1,p_2,\ldots,p_r$ in order.

Now $H$ is metacyclic; its Fitting subgroup $F$ is cyclic and contains its
centralizer. If $F$ contains a subgroup $P$ of order $p_1$, then this subgroup
is normal in $G$; and conversely, a normal subgroup of prime order is contained
in $F$. Let $P$ be a subgroup of order $p_1$ and suppose that $P\not\le F$.
Then $P$ normalizes but does not centralize $F$, so $P$ must act non-trivially
on a cyclic $p$-subgroup of $F$, and hence on a cyclic subgroup of order 
$p\ne p_1$; then $G$ has a non-abelian subgroup of order $pp_1$, a
contradiction. So $F$ contains a cyclic subgroup of $P$ order $p_1$.

Now $P$ is normal in $G$, and so $P\le Z(G)$ as required, since its
automorphism group is divisible only by primes smaller than $P$.
\end{proof}

\begin{theorem}
Let $G$ be a group of order $nk$, where $n,k>1$ and $\gcd(n,k)=1$, and let
$\Gamma$ be the commuting graph of $G$. Then $G$ is a Frobenius group with
Frobenius complement of order $k$ if and only if $\Gamma$ satisfies the
following conditions:
\begin{itemize}
\item[(a)] there is a dominant vertex $v$;
\item[(b)] $\Gamma\setminus\{v\}$ has a component of size $n-1$ and $n$
components of size $k-1$, and each component has a dominant vertex.
\end{itemize}
\end{theorem}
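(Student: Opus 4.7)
The plan is to prove both implications by a careful analysis of centralizers and of the components of $\Gamma\setminus\{1\}$, leveraging the two preceding lemmas on non-trivial centres.

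For the forward direction, assume $G$ is a Frobenius group with kernel $N$ of order $n$ and complement $H$ of order $k$. Since $N_G(H)=H$, there are exactly $[G:H]=n$ conjugates $H^{g_1},\dots,H^{g_n}$ of $H$, and Frobenius' partition theorem writes $G\setminus\{1\}$ as the disjoint union of $N\setminus\{1\}$ and the $n$ sets $H^{g_j}\setminus\{1\}$. The identity is dominant, giving (a). The fixed-point-free action of $H$ on $N\setminus\{1\}$ yields $C_G(x)\subseteq N$ for every $1\ne x\in N$, and similarly $C_G(y)\subseteq H^{g_j}$ for every $1\ne y\in H^{g_j}$; hence no edge of $\Gamma$ joins two of these pieces, so each is a union of components of $\Gamma\setminus\{1\}$. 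By the two lemmas just proved, each of $Z(N)$ and $Z(H^{g_j})$ contains a non-identity element, which is dominant within its piece. Thus each piece is a single component with a dominant vertex, proving (b).

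For the converse, assume (a) and (b). First observe $v=1$: otherwise the identity is a dominant vertex of $\Gamma\setminus\{v\}$, forcing it to be connected and contradicting (b). Let $A$ be the component of size $n-1$ and $B_1,\dots,B_n$ the components of size $k-1$, with $a$ and $b_j$ their dominant vertices. Since $a$ commutes with every element of $A\cup\{1\}$ and with no element of any $B_j$ (components are disjoint), we have $C_G(a)=A\cup\{1\}$, so $N:=A\cup\{1\}$ is a subgroup of order $n$; the analogous argument gives subgroups $H_j:=B_j\cup\{1\}$ of order $k$. Conjugation preserves commutation and so permutes components; because $\gcd(n,k)=1$ and $n,k>1$ force $n\ne k$, the component $A$ is uniquely determined by its size, hence $gAg^{-1}=A$ for all $g$, and $N\trianglelefteq G$.

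Since $|N|\cdot|H_j|=nk=|G|$ and $N\cap H_j=\{1\}$, each $H_j$ is a complement of $N$. The Schur--Zassenhaus theorem (applicable as $\gcd(n,k)=1$) now forces all complements of $N$ to be conjugate, so the $H_j$ form a single $G$-conjugacy class of subgroups. Consequently $[G:N_G(H_1)]=n$, whence $N_G(H_1)=H_1$; and for any $g\notin H_1$ the conjugate $gH_1g^{-1}$ is some $H_j\ne H_1$, giving $H_1\cap gH_1g^{-1}=\{1\}$, which is exactly the Frobenius condition. Thus $G$ is Frobenius with complement $H_1$ of order $k$. I expect the main obstacle to be the converse direction, particularly the transition from the purely combinatorial data of components-with-dominant-vertices to the subgroup identifications $N$ and $H_j$, and then the clean invocation of Schur--Zassenhaus to upgrade ``each $H_j$ is a complement'' into ``all $H_j$ are conjugate,'' which is what yields the Frobenius structure.
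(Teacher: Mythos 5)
Your proof is correct. The forward direction is essentially the paper's argument (identity dominant; centralizers of non-identity elements confined to the kernel or to a single complement-conjugate, so the pieces of the Frobenius partition are exactly the components; the two lemmas supply dominant vertices). The converse, however, takes a genuinely different route. The paper lets $G$ act by conjugation on the set $\Omega$ of the $n$ small components, proves transitivity via Sylow's theorem (a Sylow $p$-subgroup for $p\mid k$ lies in each $H_j$), and then uses the Orbit-counting Lemma: the fixed-point sum is at least $n+n(k-1)=nk=|G|$ and must equal $|G|$ by transitivity, forcing every non-identity element to fix at most one point; this exhibits $G$ as a Frobenius permutation group with point stabilizer $H_j$. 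You instead extract the subgroups $N=A\cup\{1\}$ and $H_j=B_j\cup\{1\}$ as centralizers of the dominant vertices, observe that $N$ is normal because it sits in the unique component of size $n-1$ (here $n\ne k$ is where the coprimality hypothesis earns its keep), note each $H_j$ is a complement of $N$, and invoke the conjugacy part of Schur--Zassenhaus to get $N_G(H_1)=H_1$ and hence the malnormality condition $H_1\cap H_1^g=\{1\}$ for $g\notin H_1$. Both arguments are valid; the trade-off is that your route is structurally cleaner (it lands directly on the subgroup-theoretic definition of a Frobenius group and makes the normal kernel visible from the start), but the conjugacy half of Schur--Zassenhaus in full generality rests on the solvability of $N$ or $G/N$, which for arbitrary coprime orders is guaranteed only via the Feit--Thompson theorem -- a much heavier tool than the paper's elementary Sylow-plus-counting argument. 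If you want to avoid that dependence, you could replace the Schur--Zassenhaus step by the paper's Sylow argument to show the $H_j$ are all conjugate, after which your endgame goes through unchanged.
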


\begin{proof}
Suppose that $G$ is a Frobenius group with kernel $N$ of order $n$ and 
complement $H$ of order $k$. The identity is a dominant vertex. Also
non-identity vertices in the kernel do not commute with non-identity vertices
in a complement, and non-identity vertices in different complements do not
commute with each other. So the components of $\Gamma\setminus\{1\}$ are
as stated, and the lemmas above show that they have dominant vertices.

Conversely, suppose that the commuting graph $\Gamma$ of $G$ has properties
(a) and (b). If $C$ is a component with a dominant vertex $c$, then the
centralizer $C_G(c)$ is equal to $C\cup\{1\}$, which is thus a subgroup of $G$.
Let $N$ be the subgroup containing the component of size $n-1$, and let
$H_1,\ldots,H_n$ be the subgroups containing the other components.

Since $\Gamma$ is invariant under automorphisms of $G$, we have an action of
$G$ on the set $\Omega$ of components of $\Gamma\setminus\{v\}$ of size $k-1$
by conjugation. We show that this action satisfies the conditions for a
Frobenius group given above.

Choose a prime $p$ dividing $k$. Since $p$ does not divide $n$, the subgroup
$N$ cannot contain a Sylow $p$-subgroup; but each of $H_1,\ldots,H_n$ contains
such a subgroup. By the conjugacy part of Sylow's theorem, $G$ acts
transitively on $\Omega$.

A non-identity element of $G$, acting by conjugation, fixes itself, and so
fixes the component containing it. We must show that it fixes no other
component. Count fixed point of elements of $G$. The identity fixes $n$;
non-identity elements of $N$ fix ${}\ge0$; and the remaining elements fix
${}\ge1$. So the sum of the fixed point numbers is at least $n+n(k-1)=nk$.
But, by the Orbit-counting Lemma, this sum is equal to $|G|=nk$, since
$G$ is transitive. So equality holds; non-identity elements of $N$ fix no
point, and the remaining non-identity elements fix exactly one point each.
So $G$ is a Frobenius group.
\end{proof}

Can we go further and identify an individual Frobenius group from its
commuting graph? The above analysis gives us the commuting graphs of the
Frobenius kernel and complement, so we would need to be able to recognise
these (albeit from rather restricted classes of groups). But we would also
need to identify the fixed-point-free action of $H$ on $N$, and it is not
clear how to do this from the graph. Given $H$ and $N$, we could simply
compute all fixed-point-free actions of $H$ on $N$, and conclude that
$G$ was the semidirect product given by one of these actions.

\section{Commuting graphs of $p$-groups of order $p^3$}


Let $G$ be a nonabelian $p$-group of order $p^3$. As it is nonabelian,
its center $Z(G)$ is $C_p$ (cyclic of order $p$). Furthermore, as
$G/Z(G)$ is abelian, being order $p^2$, and it cannot be cyclic for
otherwise $G$ would be abelian, $G/Z(G)=C_p\times C_p$.

It follows that the $G$-homomorphism $\phi: x\mapsto x^p$ has image
contained in $Z(G)=C_p$. Therefore, $\ker(\phi)$ is a subgroup of
order $p^2$ or $p^3$.\\

\noindent\textbf{Case 1: $\ker(\phi)$ is order $p^3$}~~ Then every
nontrivial element in $G$ has order $p$.

For any $x\notin Z$, we have $\angle{x,Z}=\{x^iz^j\mid 0\le i,j\le
p-1\}$ is a $G$-subgroup of order $p^2$ (which is always abelian).

Given the graph $X$, we can first identify $Z$ (the degree $n-1$
vertices). Take any vertex $x\notin Z$. Then $\{x\}\cup Z$ is a clique
which we can keep growing as follows: if $S$ is the current clique
pick any vertex $y$ not in $S$ such that $y$ is adjacent to all of $S$
and include it. Since $\angle{Z,x}$ is an abelian subgroup of $G$ and
is of order $p^2$, we will be able to grow the clique to precisely
$\angle{Z,x}$ and no further (because $G$ is not abelian).

We can repeat the above process of building a $p^2$-size clique
starting with a fresh vertex $x'$ each time to obtain $p+1$ cliques
corresponding to the subgroups of size $p^2$. These cliques all
intersect pairwise at $Z$ and are otherwise mutually disjoint.  There
will be no edges in the graph between a $z^ix^j$ and a $z^\ell y^m$ if
$x\ne y$ and $0< j,m\le p-1$.

Thus, we can recognize precisely the commuting graphs of such groups
of order $p^3$.\\

\noindent\textbf{Case 2: $\ker(\phi)$ is order $p^2$:}~~ In this case
there are elements in $G$ of order $p^2$ (not $p^3$ because $G$ is
nonabelian). Indeed, each element in $G\setminus \ker(\phi)$ is order
$p^2$. Clearly, the cyclic groups $\angle{x}$, $x\in G\setminus
\ker(\phi)$, all intersect precisely at the group's center $Z$ (like a
sunflower's center). That would account for
\[
\frac{(p^3-p^2)}{(p^2-p)} = p
\]
such cyclic groups of order $p^2$. These $p^3-p^2$ vertices in the
graph $X$ would form a ``sunflower'' of $p$ cliques, each of size
$p^2$ and all intersecting at the center $Z$.

The remaining $p^2$ elements are in $\ker(\phi)$ which will also form
a clique of size $p^2$. Thus, we again have a sunflower of $p+1$
cliques of size $p^2$ each that pairwise intersect precisely at
the center $Z$ and no other element is repeated.

This commuting graph structure is exactly as in the first case and can
be easily detected.

\section{Recognizing commuting graphs of extraspecial groups}\label{extrasp-sec}

Let $p$ be an odd prime. Let $G$ be an extraspecial group of order $p^{2n+1}$,
$n\ge 2$, and let $X=(V,E)$ be an undirected graph with $p^{2n+1}$.

Our goal is to design a polynomial-time algorithm that takes a simple
undirected graph $X=(V,E)$, with $p^{2n+1}$ vertices, as input and
determines if $X$ is the commuting graph of an extraspecial group $G$
of order $p^{2n+1}$. Moreover, the algorithm is required to find a
bijection $v\mapsto g_v$ labeling each vertex $v\in V$ by a unique
group element $g_v\in G$.

We recall that for an extraspecial group $G$ its centre $Z(G)$ is of
order $p$ and coincides with its derived subgroup $G'$ and the
Frattini subgroup $\Phi(G)$. Furthermore, it is known that there are
exactly two non-isomorphic extraspecial groups of order $p^{2n+1}$,
for each prime $p$. They are given by the following generator-relator
presentations:
\begin{itemize}
\item $G_1=\angle{z,x_i,y_i, 1\le i\le n}$ such that $[x_i,x_j]=1$,
  $[y_i,y_j]=1$ for all $i, j$, and $[x_i,y_j]=1$ for all $i\ne
  j$. $[x_i,y_i]=z$ for all $i$, and $x_i^p=y_i^p=z^p=1$ for all
  $i$. The center $Z(G_1)=\angle{z}$.

\item $G_2=\angle{z,x_i,y_i, 1\le i\le n}$ such that $[x_i,x_j]=1$,
  $[y_i,y_j]=1$ for all $i, j$, and $[x_i,y_j]=1$ for all $i\ne
  j$. And $[x_i,y_i]=z$ for all $i$, and $x_i^p=1=z^p$, $y_i^p=z$ for
  all $i$. The center $Z(G_2)=\angle{z}$.
\end{itemize}

It is clear from the above that $G_1$ and $G_2$ have isomorphic
commuting graphs. Thus, our aim is to design an algorithm that
identifies the vertices of the input graph $X$ with, say $G=G_1$, such
that all commuting pairs are realized by the edges of $X$.

First, notice that the vertices of $X$ corresponding to the centre
$Z=Z(G)$ of $G$ are easily detected as precisely the $p$ vertices in
$X$ of degree $p^{2n+1}-1$ each.

In either case ($G\in\{G_1,G_2\}$), we note that its centre $Z(G)$ can
be identified with the additive group of $\F_p$, and the quotient
$G/Z(G)$ with a $2n$-dimensional vector space $V(2n,p)$ over
$\F_p$. Moreover, the commutation map 
\[
(xZ(G),yZ(G))\mapsto [x,y]
\]
from $G/Z(G)\times G/Z(G()$ to $Z(G)$ defines a \emph{symplectic form}
on $V(2n,p)$, that is, a non-degenerate alternating bilinear form
$\sympl{,}$ on $V(2n,p)$ with values in $\F_p$. More precisely, we
recall the following.

Let $V(2n,p)$ be the $2n$-dimensional vector space over $\F_p$ which
is isomorphic to $G/Z(G)$, where $G$ is an extraspecial group of order
$p^{2n+1}$. Let $\sympl{,}$ be a \emph{symplectic form} on $V(2n,p)$.
I.e.,
\[
\sympl{,} : V(2n,p)\times V(2n,p)\to \F
\]
is a bilinear form that is \emph{antisymmetric} and $\sympl{u,u}=0$
for all vectors $u$.

We additionally know that $\sympl{,}$ is \emph{non-degenerate}: there
is no nonzero $u$ such that $\sympl{u,v}=0$ for all $v\in V(2n,p)$.
For a subset $S\subset V(2n,p)$ let
\[
S^\perp = \{u\in V(2n,p)\mid \sympl{u,v}=0 \text{ for all } v\in S\}
\]
denote the subspace of $V(2n,p)$ consisting of vectors
\emph{orthogonal} to each vector in $S$: $u$ is orthogonal to $v$ if
$\sympl{u,v}=0$.

The following is immediate.

\begin{lemma}\label{G-to-sympl}
From the extraspecial group $G$ given by its multiplication table as
input, we can construct its corresponding symplectic form $\sympl{,}$,
as an explicit bilinear map from $\F_p^{2n}\times \F_p^{2n}\to \F_p$,
in polynomial time.
\end{lemma}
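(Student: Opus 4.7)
The plan is to build the symplectic form in three stages, each a polynomial-time operation on the multiplication table of $G$: first identify $Z(G)$ and pin down an isomorphism $Z(G)\simeq \F_p$, then fix a basis of $G/Z(G)$ as a $2n$-dimensional $\F_p$-vector space, and finally read off the matrix of the commutator map in that basis. For the first stage, I would scan the table, compute $C_G(x)$ for every $x\in G$, and collect those elements whose centralizer is all of $G$; these are precisely the $p$ elements of $Z(G)$. Fixing any non-identity $z\in Z(G)$ then gives $Z(G)=\{1,z,z^2,\ldots,z^{p-1}\}$ and an isomorphism $z^k\mapsto k$ with $\F_p$.

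For the second stage, since $G$ has nilpotency class $2$ with $[G,G]=Z(G)$, the quotient $G/Z(G)$ is elementary abelian of order $p^{2n}$. I would greedily extend a basis: starting from the empty set, at each step pick any $g\in G$ whose coset lies outside the $\F_p$-span of the current basis cosets, until $2n$ generators $g_1,\ldots,g_{2n}$ have been chosen. The span test, and more generally writing any coset $gZ(G)$ as $\sum_i u_i\,g_iZ(G)$ with $u_i\in\F_p$, reduces to a linear algebra problem in the elementary abelian quotient; even a brute-force search over $\F_p^{2n}$ runs in $\poly(p^{2n+1})$ time, and in practice Gaussian elimination on an explicit list of coset representatives suffices.

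For the third stage, for each pair $i<j$ I would compute $[g_i,g_j]=g_i^{-1}g_j^{-1}g_ig_j\in Z(G)$ from the table and write it as $z^{c_{ij}}$; setting $c_{ji}=-c_{ij}$ and $c_{ii}=0$ gives an antisymmetric matrix $C\in \F_p^{2n\times 2n}$, and I define $\sympl{u,v}:=u^\top C v$. Because $G$ has class $2$, commutators are central and the identities $[xy,w]=[x,w][y,w]$, $[x,yw]=[x,y][x,w]$ hold, so the commutation map factors through $G/Z(G)\times G/Z(G)\to Z(G)$ and is $\F_p$-bilinear in the chosen coordinates. Thus $\sympl{g_iZ(G),g_jZ(G)}=c_{ij}$ pins down a well-defined bilinear form that agrees with the commutation form on $V(2n,p)$. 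Non-degeneracy is automatic: any nonzero $u$ in the radical would correspond to a non-central element commuting with every $g_i$, hence with all of $G$, contradicting $g_i\notin Z(G)$.

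The only non-trivial computational step is the basis extension and coset expansion in $G/Z(G)$; this is the main potential obstacle, but since $G/Z(G)$ is elementary abelian it reduces to linear algebra over $\F_p$ and fits comfortably within the claimed $\poly(p^{2n+1})$ bound.
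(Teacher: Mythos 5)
Your construction is correct and is exactly the standard argument the paper has in mind---the paper states this lemma without proof, simply calling it immediate. Two cosmetic points: the fact that $G/Z(G)$ is elementary abelian follows from $\Phi(G)=Z(G)$ (part of the standard description of extraspecial groups), not merely from $G$ having class $2$ with $G'=Z(G)$; and in the non-degeneracy step the contradiction is that $u$ would be central while assumed non-central, i.e.\ the clash is with $u\notin Z(G)$ rather than with $g_i\notin Z(G)$.
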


\begin{definition}\label{orth-graph-def}
The \emph{orthogonality graph} of $V(2n,p)$ is an undirected simple
graph with $V(2n,p)$ as vertex set such that for each pair of distinct
vectors $u,v\in V(2n,p)$, $(u,v)$ is an undirected edge in the graph
iff $\sympl{u,v}=0$.
\end{definition}

For the actual input $X$, which is the purported commuting graph of
$G$, we now show that the orthogonality graph of the underlying
symplectic form can be efficiently computed.

\begin{lemma}\label{commute-to-ortho}
  Given as input a candidate commuting graph $X=(V,E)$, of the
  extraspecial group $G$ of order $p^{2n+1}$, we can compute in
  polynomial time an undirected graph $X_o=(V_o,E_o)$ with
  $|V_o|=p^{2n}$ such that $X_o$ is the orthogonality graph of the
  symplectic form $\sympl{,}$ if and only if $X$ is the commuting
  graph of $G$.
\end{lemma}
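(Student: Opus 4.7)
The plan is to exploit the fact that in an extraspecial group $G$ the commuting relation factors through the central quotient $G/Z$ and matches the orthogonality relation of the symplectic form: $[x,y]=1$ iff $\sympl{xZ,yZ}=0$, and this relation depends only on the cosets $xZ$ and $yZ$ since $Z=Z(G)$ is central. Consequently $\Gamma(G)$ is obtained from the orthogonality graph of $V(2n,p)$ by replacing each vector by a clique of size $p$ (its coset of $Z$) and joining two distinct cliques completely iff the corresponding vectors are orthogonal; in particular the clique corresponding to $0\in V(2n,p)$, which is $Z$ itself, is fully joined to every other clique. The algorithm must undo this blow-up.

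The construction of $X_o$ proceeds in three stages. First, locate the vertices of $X$ of degree $|V|-1$; if there are not exactly $p$ of them, declare $X$ invalid and stop. Second, compute the equivalence relation $u\equiv v$ iff $\bar{N}(u)=\bar{N}(v)$ in $X$, and check that every equivalence class has size exactly $p$; otherwise stop. Third, let $V_o$ be the set of equivalence classes and place an edge between two distinct classes $C,C'$ whenever some (equivalently, every) pair $u\in C$, $v\in C'$ is an edge of $X$. All three steps run in polynomial time, and when the procedure succeeds we obtain a graph on $p^{2n}$ vertices.

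For the forward direction, we use the earlier proposition identifying $\bar{N}(x)$ with $C_G(x)$ together with non-degeneracy of $\sympl{,}$: the inclusion $C_G(x)\supseteq xZ$ is immediate because $Z$ is central, while $C_G(x)/Z=(xZ)^\perp$ in $V(2n,p)$, so $C_G(x)=C_G(y)$ iff $(xZ)^\perp=(yZ)^\perp$ iff $xZ=yZ$. Thus the equivalence classes are exactly the cosets of $Z$, each of size $p$, and quotienting produces the orthogonality graph of $\sympl{,}$. For the converse, if the algorithm succeeds then $u\equiv v$ forces $u\in\bar{N}(v)$, so each class is a clique in $X$, and pairs in distinct classes are either all edges or all non-edges by definition of $\equiv$. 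Hence $X$ is the $K_p$-blow-up of $X_o$; if $X_o$ is the orthogonality graph of $\sympl{,}$, then by the description above this blow-up is precisely $\Gamma(G)$ under any labelling sending each class to the corresponding coset of $Z$. The main technical point is the forward step two: the claim that each $\equiv$-class has exactly $p$ elements when $X=\Gamma(G)$ is exactly the non-degeneracy of $\sympl{,}$, and is what guarantees $|V_o|=p^{2n}$.
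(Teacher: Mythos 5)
There is a genuine gap, and it sits at exactly the point you flag as the main technical step. You claim that for noncentral $x,y$ one has $C_G(x)=C_G(y)$ iff $(xZ)^\perp=(yZ)^\perp$ iff $xZ=yZ$, and hence that the closed-twin classes are precisely the cosets of $Z$, each of size $p$. The first equivalence is correct, but the second is false: the hyperplane $v^\perp$ determines a nonzero vector $v$ only up to a nonzero scalar, so $(xZ)^\perp=(yZ)^\perp$ holds precisely when $yZ=\alpha\cdot xZ$ for some $\alpha\in\F_p^*$, i.e.\ when $\langle x,Z\rangle=\langle y,Z\rangle$. Concretely, in your own blow-up picture: if $v=\alpha u$ with $\alpha\neq 1$, then $u$ and $v$ are adjacent in the orthogonality graph and have the same closed neighbourhood there, so the cliques over $u$ and $v$ merge into a single closed-twin class of $X$. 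Hence when $X=\Gamma(G)$ and $p$ is odd (the case the lemma addresses), the closed-twin classes on $V\setminus Z$ have size $p^2-p$, not $p$: your step-two check ``every class has size exactly $p$'' rejects every genuine input, and even without that check the quotient has $1+(p^{2n}-1)/(p-1)$ vertices rather than the required $p^{2n}$. Only for $p=2$ do your classes coincide with the cosets of $Z$.

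The paper's proof works with exactly this coarser equivalence and then repairs the count: each size-$(p^2-p)$ class is $H\setminus Z$ for an order-$p^2$ subgroup $Z\le H\le G$ and is collapsed to one vertex (a projective point); a vertex for $0$ is added; and then each nonzero collapsed vertex is re-expanded into $p-1$ copies standing for its $p-1$ scalar multiples. The re-expansion can be done with an arbitrary assignment because $\sympl{u,v}=0$ is invariant under scaling either argument, so all copies inherit the same adjacencies consistently. Your description of $\Gamma(G)$ as the $K_p$-blow-up of the orthogonality graph is correct and your converse direction is structured reasonably, but the recovery procedure must be modified along these lines: the closed-neighbourhood relation can only see projective points, not individual vectors, and the missing re-expansion step is what makes $|V_o|=p^{2n}$ come out right.
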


\begin{proof}
If $X$ is the commuting graph of $G$ then the subset of all
\emph{dominating vertices} of $X$ (i.e., vertices of degree $|V|-1$ in
$X$) corresponds precisely to the elements of the centre
$Z(G)$. Hence, there are exactly $p$ of them in $X$. Let $Z$ denote
this subset of vertices. If that number is different from $p$ then we
can reject $X$ as being the commuting graph of an order $p^{2n+1}$
extraspecial group.

Next, recall that the \emph{closed neighborhood} $\bar{N}(v)$ of a
vertex $\in V$ is defined as
\[
\bar{N}(v) = \{u\in V\mid (u,v)\in E\} \cup \{v\}.
\]
Vertices $v_1,v_2,\in V$ are called \emph{closed twins} if
$\bar{N}(v_1)=\bar{N}(v_2)$.  We can identify all the subgroups of $G$
of order $p^2$ by defining the following equivalence relation on
$V\setminus Z$. This clearly defines an equivalence relation on
vertices in $V\setminus Z$.

For each order $p^2$ subgroup $H\le G$ such that $Z(G)\le H\le G$, it
is easy to see that for all $h,h'\in H\setminus Z(G)$ their
centralizers in $G$ coincide: $C_G(h)=C_G(h')$. Hence, if $X$ is the
commuting graph of $G$, then $\bar{N}(u)=\bar{N}(v)$ for all $u,v\in
U\setminus Z$, where $U$ corresponds to $H$. On other hand, if $u\in
H\setminus Z$ and $v\notin H$ then we can use the symplectic form
$\sympl{,}$ to see that for any two linearly independent vectors
$v_1,v_2\in V(2n,p)$ we can find a vector orthogonal to exactly one of
them (by expressing $v_1,v_2$ using the basis $\{e_i,f_i, 1\le i\le
n\}$). Thus, the equivalence classes defined by the closed-twins
equivalence relation identifies all $p^2-p$ size vertex subsets of
$V\setminus Z$ that corresponds to $H\setminus Z(G)$ for each subgroup
$H$ of order $p^2$ such that $Z(G)\le H\le G$. The number of such
equivalence classes is
\[
\frac{p^{2n+1}-p}{p^2-p} = 1 + p + p^2 +\cdot + p^{2n-1}.
\]

To obtain the corresponding orthogonality graph $X_o=(V_o,E_o)$ we
apply the following three steps to $X$.

\begin{itemize}
\item[(a)] Collapse each equivalence class in $V\setminus Z$ into a
  single vertex. This gives rise to a set $V_c$ of
  $\sum_{i=0}^{2n-1}p^i$ many vertices. The edges between vertices
  in $V_c$ are naturally inherited from $X$.
\item[(b)] Include a new vertex $v_0$ corresponding to the $0$ element
  of $V(2n,p)$; this corresponds to $p$ dominant vertices of $X$.
\item[(c)] The process of collapsing the closed-twin equivalence
  classes identifies vectors $v\in V(2n,p)$ with all the $p-1$ nonzero
  scalar multiples $\alpha v, \alpha\in\F^*_p$. We restore the $p-1$
  copies by replacing each $v\in V_c$ by $p-1$ copies. The edges
  between these vertices are naturally inherited.
\end{itemize}


By construction, $X_o$ is the orthogonality graph of the symplectic
form on $V(2n,p)$ defined by the commutation map of $G$ if and only if
$X$ is the commuting graph of $G$.
\end{proof}

\subsection{Recognizing the orthogonality graph of the symplectic form}

We now show that the orthogonality graph of a non-degenerate
symplectic form on $V(2n,p)$ can be recognized in polynomial time.

\begin{theorem}\label{ortho-recog-thm}
Given a simple undirected graph $X=(V,E)$ on a vertex set $V$ of size
$p^{2n}$ vertices, in time polynomial in the size of $X$ we can
recognize if $X$ is the orthogonality graph of some symplectic form on
$V(2n,p)$ and, if so, in polynomial time we can also compute a
bijection from $V$ to $V(2n,p)$ and determine a symplectic form
$\sympl{,}$ that is consistent with $X$.
\end{theorem}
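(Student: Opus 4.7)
The plan is to argue by induction on $n$, mirroring the Witt decomposition of a non-degenerate symplectic space into orthogonal hyperbolic planes. The base case $n=1$ is directly analyzable: the orthogonality graph on $p^2$ vertices consists of a unique universal vertex (to be labeled $0$) together with $p+1$ vertex-disjoint $(p-1)$-cliques corresponding to the $p+1$ projective lines of $V(2,p)$. Such a graph is easily recognized; to label it, pick two cliques as $\{ke_1\}_{k\in\F_p^*}$ and $\{kf_1\}_{k\in\F_p^*}$ and assign the remaining $p-1$ cliques by an arbitrary bijection to the diagonal lines $\{k(e_1+\lambda f_1)\}_{k\in\F_p^*}$ for $\lambda\in\F_p^*$. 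Since $\GL(2,p)$ acts transitively on ordered bases of $V(2,p)$, any such labeling is consistent with some non-degenerate symplectic form.

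For the inductive step, first run structural checks: verify that $|V|=p^{2n}$; find the unique vertex of degree $|V|-1$ and label it $0$; compute the closed-twin equivalence classes of $V\setminus\{0\}$ and verify that each class has size exactly $p-1$ with $(p^{2n}-1)/(p-1)$ classes in total. These classes together with $0$ are the projective points of $V(2n,p)$, as in the proof of Lemma~\ref{commute-to-ortho}. Pick any nonzero vertex $u$ and any non-neighbor $v$ of $u$, provisionally label them $e_1$ and $f_1$ with $\sympl{e_1,f_1}=1$, and identify the $(2n-2)$-dimensional orthogonal complement of $H_1:=\text{span}(e_1,f_1)$ as $V':=\bar{N}(u)\cap\bar{N}(v)$. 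Verify $|V'|=p^{2n-2}$, then recursively apply the algorithm to the induced subgraph on $V'$ to obtain a symplectic basis $(e_2,f_2,\ldots,e_n,f_n)$ and labels for every vertex of $V'$. The plane $H_1$ is recovered as the set of vertices whose closed neighborhood in $X$ contains $V'$ (i.e., $(V')^\perp$), and its $p^2$ vertices are labeled via the base case.

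It remains to label the ``mixed'' vertices outside $H_1\cup V'$. Each such $v$ decomposes uniquely as $v=v_a+v_b$ with $v_a\in H_1\setminus\{0\}$ and $v_b\in V'\setminus\{0\}$. The adjacency pattern of $v$ with the already-labeled vertices of $H_1$ pins down the projective line of $v_a$ inside $H_1$ (in a non-degenerate $V(2,p)$, two nonzero vectors are orthogonal iff they lie on the same projective line), and similarly for $v_b$ inside $V'$. The remaining freedom is a scalar ratio between the two projections, which one reads off from adjacency with a small set of already-labeled mixed vertices $w=w_a+w_b$ via the identity $\sympl{v,w}=\sympl{v_a,w_a}+\sympl{v_b,w_b}$, yielding a single $\F_p$-linear constraint per witness $w$.

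The hard part is bootstrapping the mixed-vertex labeling, since after the base case and the recursion we have labels on $H_1$ and $V'$ but no mixed witnesses for the ratio constraints above. My plan is to exploit the freedom inherent in the choice of symplectic form: for each ``line pattern'' $(L_a,L_b)$ with $L_a$ a line of $H_1$ and $L_b$ a line of $V'$, pick one canonical mixed vertex in the corresponding twin-class and assign it the label $u_a+u_b$, where $u_a,u_b$ are the previously chosen representatives of $L_a,L_b$. These choices absorb the available scalar freedom, after which all other mixed vertices are labeled by propagating through a sequence of small $\F_p$-linear systems. A final verification pass reconstructs the orthogonality graph of the symplectic form defined by the complete labeling and compares it with $X$; by Witt's extension theorem, all non-degenerate symplectic forms on $V(2n,p)$ are equivalent, so if $X$ is a genuine orthogonality graph then a globally consistent labeling exists and is found, and otherwise some inconsistency forces a rejection.
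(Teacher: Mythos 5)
Your overall skeleton is the same as the paper's: identify the zero vertex as the unique dominant vertex, pick non-adjacent $e_1,f_1$, recurse on their common neighbourhood $V'$, recover the hyperbolic plane $H_1=(V')^{\perp}$ as the set of vertices adjacent to all of $V'$, and then label the mixed vertices by reading off their line patterns in $H_1$ and in $V'$ from adjacencies. Up to that point your argument is sound, and in places more explicit than the paper's (for instance in isolating the fact that adjacency with $H_1\cup V'$ determines each projection of a mixed vertex only up to a scalar).

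The gap is in your bootstrapping step, and it is a real one. You claim that choosing, for each line pair $(L_a,L_b)$, one arbitrary vertex with that adjacency pattern and declaring it to be $u_a+u_b$ ``absorbs the available scalar freedom.'' It does not. Once $H_1$ and $V'$ carry labels, any linear map preserving orthogonality and fixing every line of $H_1$ and every line of $V'$ must act as a scalar $a$ on $H_1$ and a scalar $c$ on $V'$ with $a^2=c^2$; together with permutations inside twin classes, this is essentially all the remaining symmetry. So the projective label of \emph{every} mixed vertex is forced up to a single global sign, whereas you are making one independent choice for each of the $(p+1)\cdot\frac{p^{2n-2}-1}{p-1}$ line pairs, each among $p-1$ genuinely inequivalent twin classes: the classes $[u_a+\lambda u_b]$, $\lambda\in\F_p^*$, are pairwise distinguishable, since $u_a+\lambda u_b$ and $u_a+\lambda' u_b$ have different adjacency to a mixed vertex $u_a'+w$ with $\sympl{u_b,w}\neq 0$. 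Concretely, if you label $v$ as $u_a+u_b$ and independently label $v'$ as $u_a'+u_b'$, consistency requires that $v$ and $v'$ be adjacent if and only if $\sympl{u_a,u_a'}=-\sympl{u_b,u_b'}$, and nothing in your procedure enforces this; hence even on a genuine orthogonality graph your final verification pass can fail and the algorithm would wrongly reject, and Witt's extension theorem cannot rescue an assignment that is not an isometry on the set already labelled. The repair is to make at most one free ratio choice and then \emph{compute}, rather than choose, every other ratio from mixed--mixed adjacencies via your identity $\sympl{v,w}=\sympl{v_a,w_a}+\sympl{v_b,w_b}$, checking solvability as you go. To be fair, the paper's own treatment of exactly this step (label the vertices sharing a given $V'$-neighbourhood ``arbitrarily'' as $\alpha e_1+u$) is itself quite terse, so you have correctly located the delicate part of the proof; but your proposed resolution, as stated, does not close it.
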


\begin{proof}
  Let $X=(V,E)$ be a graph with $p^{2n}$ vertices, $p$ prime. Our
  algorithm is based on an inductive argument. We know that $V(2n,p)$
  has a \emph{symplectic basis} of $2n$ vectors of the form $e_i, f_i,
  1\le i\le n$ such that
\begin{itemize}
\item $\sympl{e_i,f_i}=1$ for all $i$.
\item $\sympl{e_i,f_j}=0$ for all $i\ne j$.
\item $\sympl{e_i,e_j}=0$ for all $i,j$.
\item  $\sympl{f_i,f_j}=0$ for all $i,j$.
\end{itemize}

Moreover, we can construct such a basis for $V(2n,p)$ by the following
greedy process. Pick $e_1\ne 0$ in $V(2n,p)$ arbitrarily. Then pick
any vector $f_1\ne 0$, suitably scaled, such that
$\sympl{e_1,f_1}=1$. Notice that $f_1$ must exist as the symplectic
form is non-degenerate. Then notice that the subspace
$\{e_1,f_1\}^\perp$ is a $2n-2$ dimensional symplectic space $V'$
(w.r.t. the same symplectic form). We can continue with the basis
construction by induction applied to $V'$.

The problem we will solve is to efficiently simulate the above
construction process given only the purported commuting graph
$X=(V,E)$ as input. First of all, the zero vector $0$ is the only
vertex in $X$ adjacent to all others and is easily identified. Let
$e_1$ be any other vertex. We will choose $f_1$ as any vertex not
adjacent to $e_1$. The scaling factor does not matter since a vector
$v$ and $\alpha v, \alpha\in\F^*_p$ have identical neighborhoods in
the orthogonality graph of the symplectic space $V(2n,p)$. Let
\[
V'=\{v\in V\mid (v,e_1)\in E \text{ and } (v,f_1)\in E\}.
\]
That is, $V'$ is the common neighborhood of $e_1$ and $f_1$. Let $X'$
be the subgraph of $X$ induced by the vertex subset $V'$.  We have the
following easy observation.
\begin{claim}  
If $X$ is the orthogonality graph of a $2n$-dimensional symplectic
space over $\F_p$ then the graph $X'$ is the orthogonality graph of a
symplectic space of dimension $2n-2$ over $\F_p$.
\end{claim}
Inductively, therefore, we assume that we have checked that $X'$ is
indeed the orthogonality graph of a $2n-2$-dimensional symplectic
space over $\F_p$ and we have a labeling of the vertices of $V'$ by
linear combinations $\sum_{i=2}^n(\alpha_i e_i + \beta_i f_i),
\alpha_i,\beta_i\in \F_p$ that is consistent with the orthogonality
relation of a symplectic form $\sympl{,}$.

The remaining task for the algorithm is to find a consistent labeling
of the vertices in $V\setminus V'$.

\begin{claim}
A vertex $v\in V\setminus V'$ can be labeled by a nonzero vector
$\alpha e_1 + \beta f_1$ in $V(2n,p)$, $\alpha,\beta\in\F_p$, if and
only if $(v,u)\in E$ for all $u\in V'$.
\end{claim}

\claimproof{Consider the orthogonality graph of $V(2n,p)$. Let
  $V(2n-2,p)$ denote the subspace spanned by $e_i,f_i, 2\le i\le n$.
  Clearly, every vector of the form $\alpha e_1 + \beta f_1,
  \alpha,\beta\in\F_p$ is orthogonal to each vector in
  $V(2n-2,p)$. Conversely, consider a vector $\alpha e_1 + \beta f_1
  +v\in V(2n,p)$, where $v\in V(2n-2,p)$ is nonzero. Since $V(2n-2,p)$
  is non-degenerate, there is a $u\in V(2n-2,p)$ such that
  $\sympl{v,u}\ne 0$ which implies $\sympl{\alpha e_1 +\beta f_1 +
    v,u}=\sympl{v,u}\ne 0$.} 

Thus, we will find precisely $p^2-1$ many such vertices in $V$ that are
adjacent to all of $V'$, of which we have already labeled two vertices
as $e_1$ and $f_1$.

The next claim is also clear from the construction of the $e_i, f_i$
basis.

\begin{claim}
  If $X=(V,E)$ is the orthogonality graph of the symplectic space
  $V(2n,p)$ and $V'$ corresponds to the subspace $V(2n-2,p)$ spanned
  by $\{e_i,f_i\mid 2\le i\le n\}$ then the subset
  \[
  V[e_1]=\{v\in V\setminus V'\mid (v,e_1)\in E \text{ and } (v,f_1)\notin E\}
  \]
  consists precisely of those vertices in $V\setminus V'$ that
  correspond to the subset of vectors $\{\alpha e_1 +V'\mid \alpha\ne
  0\}$ of $V(2n,p)$.
  
  Furthermore, the vertices that correspond to $\alpha e_1, \alpha\ne
  0$ are precisely those vertices in $V[e_1]$ whose neighborhood in
  $X$ is identical to the neighborhood of the vertex labeled $e_1$.
\end{claim}

From the above claim it follows that we can identify the vertex subset
$V[e_1]$, corresponding to $\alpha e_1 +V', \alpha\ne 0$. Similarly,
we can identify $V[f_1]$, corresponding to $\beta f_1 +V', \beta\ne
0$.\\

\noindent\textbf{Labeling vertices in $V[e_1]$ and $V[f_1]$}\\

Consider the vertex subset $V[e_1]$. For each $u'\in V(2n-2,p)$ and
$u\in V(2n-2,p)$, notice that
\[
\sympl{u',u}=0 \text{ if and only if } \sympl{u',\alpha e_1 +u} = 0,
\alpha\in \F^*_p,
\]
because $\sympl{u',e_1}=0$ for all $u'\in V(2n-2,p)$.

Let $N(u,V')$ denote the neighborhood of $u$ in $V'$. The above
statement is equivalent to saying that for each vertex $u\in V'$ there
are exactly $p-1$ vertices $u''\in V[e_1]$ such that
\[
N(u'',V')=N(u,V').
\]
We can label these $p-1$ vertices arbitrarily as $\alpha e_1 +u$, for
$\alpha\in \F_p^*$. Thus, in polynomial time, we can obtain the
correct labeling of all vertices in $V[e_1]$ by the vectors in $\alpha
e_1 + V(2n-2,p), \alpha\ne 0$.

Similarly, we can obtain the correct labeling of $V[f_1]$ by the
vectors in $\beta f_1 + V(2n-2,p)$.\\

\noindent\textbf{Labeling vertices in $V[e_1+\beta f_1], \beta\ne 0$}\\

For $\beta\ne \beta'\in \F_p$, we have
\[
\sympl{e_1+\beta f_1,e_1+\beta' f_1}= \beta'-\beta\ne 0.
\]
Hence, the vertices to be labeled $e_1+\beta f_1$ and $e_1+\beta' f_1$
are not adjacent in $X$. Since $\sympl{e_1+\beta
  f_1,v}=0=\sympl{e_1+\beta' f_1,v}$ for all $v\in V'$, these two
vertices are adjacent to each vertex in $V'$.

Notice that these two vertices could have been picked instead of $e_1$
and $f_1$ in the first place and we would have still obtained the same
subset $V'$ as the subspace $V(2n-2,p)$. Thus, we can repeat the
previous argument (for labeling vertices of $V[e_1]$) to label each of
the vertex subsets $V[e_1+\beta f_1]$ for $\beta\in \F_p^*$.\\

Putting it together, clearly if $X$ is the orthogonality graph of some
non-degenerate symplectic form on $V(2n,p)$ the above algorithm
verifies that by constructing a (possibly different) symplectic form
on $V(2n,p)$, consistent with the orthogonality graph.\\

\noindent\textbf{Running time analysis}~~Let $T(n)$ denote the
running time for constructing a symplectic form $\sympl{,}$ for
$V(2n,p)$ consistent with $X$ as the orthogonality graph. The
inductive construction and the rest of the computation implies the
recurrence $T(n) = O(|V|^2) +T(n-1)$, which gives an overall cubic
bound $T(n)=O(|V|^3)=O(p^{6n+3})$ on the running time.

Putting it together, we have a polynomial-time algorithm that checks
if $X$ is the orthogonality graph for the symplectic space by finding
out a labeling of vertices by the vectors along with a consistent
symplectic form.
\end{proof}

Putting everything together, we have the following.

\begin{theorem}
  Given $X=(V,E)$ with $p^{2n+1}$ vertices we can determine in
  polynomial time if it is the commuting graph of an extraspecial
  group of order $p^{2n+1}$ and, if so, label vertices by unique
  group elements satisfying the commuting relation.
\end{theorem}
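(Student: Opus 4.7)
The plan is to chain Lemma \ref{commute-to-ortho} and Theorem \ref{ortho-recog-thm}, and then lift the symplectic-form labeling of $X_o$ back to a group-element labeling of $X$. Given $X=(V,E)$ with $|V|=p^{2n+1}$, I would first run basic sanity checks: there must be exactly $p$ dominant vertices (the candidate center $Z$), and every closed-twin equivalence class in $V\setminus Z$ must have size exactly $p^2-p$; reject otherwise. I would then apply the construction of Lemma \ref{commute-to-ortho} to build the candidate orthogonality graph $X_o$ on $p^{2n}$ vertices, and invoke Theorem \ref{ortho-recog-thm} on $X_o$. If $X_o$ is not the orthogonality graph of any symplectic form on $V(2n,p)$, reject $X$ using the ``only if'' direction of Lemma \ref{commute-to-ortho}; otherwise, obtain a bijection $\phi\colon V_o\to V(2n,p)$ and a symplectic form $\sympl{,}$ realizing $X_o$.

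The remaining work is to lift $\phi$ to a bijection $\psi\colon V\to G_1$, where $G_1$ is the extraspecial group of exponent $p$ from the standard presentation in Section \ref{extrasp-sec}; choosing $G_1$ over $G_2$ is harmless since both realize the same commuting graph. First, label the $p$ dominant vertices of $X$ by the $p$ elements of $Z(G_1)=\angle{z}$ in any order. Then, for each closed-twin equivalence class $C_i\subseteq V\setminus Z$, observe that $C_i$ collapses (in the construction of Lemma \ref{commute-to-ortho}) to a single vertex in $V_c$ that is subsequently expanded into $p-1$ vertices of $X_o$ carrying labels $\alpha\mathbf{v}_i$ for $\alpha\in\F_p^*$, where $\mathbf{v}_i$ generates the line $H_i/Z$ in $V(2n,p)$. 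Fix a preimage $g_i\in G_1$ of $\mathbf{v}_i$, partition $C_i$ arbitrarily into $p-1$ blocks of size $p$, and label the $\alpha$-th block by the $p$ elements of the coset $g_i^{\alpha}Z(G_1)$ in any order.

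Correctness then follows from the two earlier lemmas together with the observation that, within each order-$p^2$ abelian subgroup $H_i$, all $p^2-p$ non-central elements share the same centralizer $C_{G_1}(H_i)$, so closed twins are graph-indistinguishable in $X$ and the arbitrary choices made in the lift cannot violate the commuting relation. The running time is polynomial in $|V|$: the structural checks take $O(|V|^2)$, the construction of $X_o$ and the call to Theorem \ref{ortho-recog-thm} are polynomial by the earlier results, and the lift is a linear pass over the equivalence classes. There is no deep obstacle in this theorem; the main subtlety worth stating clearly is that the commuting graph does not determine which elements of $G_1$ label which vertices within a single coset of $Z(G_1)$, nor does it distinguish the $p-1$ cosets within a given $H_i$, and it is precisely because of this indistinguishability that arbitrary consistent choices in the lift are valid.
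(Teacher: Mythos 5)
Your proposal is correct and follows essentially the same route as the paper: run the reduction of Lemma~\ref{commute-to-ortho} to obtain $X_o$, invoke Theorem~\ref{ortho-recog-thm} to recover a symplectic form and vector labeling, and then lift that labeling to $G_1$ by sending each closed-twin class (an $H_i\setminus Z$) to the cosets $g_i^{\alpha}Z$, exactly as the paper does with the labels $z^j\bigl(\prod_i x_i^{\alpha_i}y_i^{\beta_i}\bigr)^k$. Your added remark that the arbitrary within-coset and within-line choices are harmless because commuting of non-central elements depends only on the line in $G/Z$ is a correct and slightly more explicit justification than the paper gives.
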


\begin{proof}
  By Lemma~\ref{commute-to-ortho} we can obtain the graph
  $X_o=(V_o,E_o)$ from $X$ in polynomial time. By
  Theorem~\ref{ortho-recog-thm} we can check if $X_o$ is the
  orthogonality graph of a symplectic form $\sympl{,}$ on $V(2n,p)$
  and also find the symplectic form.

  From the proof of Lemma~\ref{commute-to-ortho}, we have the
  following observations:
\begin{itemize}  
\item The vertex of degree $|V_o|-1$ in $X_o$ is the $0$ element of
  $V(2n,p)$, and corresponds to $Z$ in $G$.
\item For each vertex $x\in V_o$ there is a closed-twins equivalence
  class of size $p-1$ containing $x$. There is a corresponding subset
  of vertices $H_x\setminus Z$ in $X$ of size $p^2-p$, where $H_x$, in
  turn, corresponds to an order-$p^2$ subgroup of $G$ that contains
  $Z$. We have a labeling of the vertex $x$ by a linear combination
  $\sum_{i=1}^n( \alpha_i e_i + \beta_i f_i)$, and the remaining $p-2$
  vertices in the equivalence class are labeled by nonzero scalar
  multiples of this linear combination.
\end{itemize}  

  Now, using the description of the extraspecial group $G_1$ we can
  label the vertices of the clique $H_x\setminus Z$ with the group
  elements
\[
  z^j\cdot \left(\prod_{i=1}^n x_i^{\alpha_i}.y_i^{\beta_i}\right)^k,
  0\le j\le p-1, 1\le k \le p-1.
\]  
  
 This will give us the labeling of $X$ with the elements of the
 extraspecial group $G_1$ of order $p^{2n+1}$, consistent with the
 commuting graph $X$. This completes the proof.
\end{proof}

\section{A quasipolynomial time algorithm in the general case}

We now describe a $2^{O(\log^3n)}$ time algorithm for checking if a
given $n$-vertex graph is the commuting graph of some $n$-element
group and, if so, labeling the vertices of the graph by the group
elements consistent with all the commuting pairs.

This algorithm is based on a result of McIver and Neumann \cite{MN87}
that bounds the number of $n$ element groups by $2^{O(\log^3n)}$. The
theorem is sharpened by subsequent work of Babai et al \cite{Bab} on
short presentations for finite groups. They show that groups of order
$n$ (that do not have a specific finite simple group type, called the
Ree groups, as section) have short generator-relator presentations
$\angle{Y|R}$ of size $O(\log^3n)$. The crux of their proof is that
all simple groups of $n$ elements (except the Ree groups) have
generator-relator presentations of size $O(\log^2 n)$. Combined with
the fact that $n$ element groups have composition series of $\log n$
length gives the $O(\log^3n)$ bound.

\begin{theorem}
There is a $2^{O(\log^3n)}$ time algorithm that recognizes if a given
$n$-vertex graph is the commuting graph of a group of order $n$ that
does not have the Ree groups as section.
\end{theorem}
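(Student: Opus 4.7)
The plan is to combine the short presentation theorem of Babai et al.~\cite{Bab} with a brute force enumeration of candidate groups, and then test isomorphism between the input graph and each candidate commuting graph using Babai's quasipolynomial graph isomorphism algorithm. I would proceed as follows.

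First, since every group of order $n$ (with no Ree group section) has a generator--relator presentation $\langle Y \mid R\rangle$ of total size $O(\log^3 n)$ over some alphabet of polynomial size, I would enumerate all candidate strings of length $O(\log^3 n)$ encoding such presentations. The number of these candidate strings is $2^{O(\log^3 n)}$. For each candidate presentation, I would then check whether it actually defines a group of order exactly $n$: this can be done by running the Todd--Coxeter coset enumeration on the trivial subgroup. Since we cap the procedure at $n$ cosets and reject if more are produced, the enumeration terminates in time polynomial in $n$ and the length of the presentation. At the end of this step, each accepted presentation gives us an explicit multiplication table of a group $G$ of order $n$ (indexed by cosets), computed in time $\poly(n)$.

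Second, for each candidate group $G$ produced above, I would compute its commuting graph $\Gamma(G)$ directly from the multiplication table in time $O(n^2)$. I would then invoke Babai's quasipolynomial graph isomorphism algorithm to test whether the input graph $X$ is isomorphic to $\Gamma(G)$, in time $2^{O(\log^c n)}$ for some constant $c$. If an isomorphism $\sigma:V(X)\to G$ is returned, I would output the labeling $v\mapsto \sigma(v)$, which by construction realizes the commuting relations of $G$ on the edges of $X$. If no candidate presentation passes the test, then by the short presentation theorem $X$ is not the commuting graph of any $n$-element group outside the Ree-section exclusion, and the algorithm rejects.

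For the running time, the outer loop contributes a factor of $2^{O(\log^3 n)}$, each inner coset enumeration and commuting-graph construction is $\poly(n) = 2^{O(\log n)}$, and each isomorphism call is $2^{O(\log^c n)}$ with $c$ a small constant. Thus the total running time is $2^{O(\log^3 n)} \cdot 2^{O(\log^c n)} \cdot \poly(n) = 2^{O(\log^3 n)}$, as claimed. The main technical obstacle to watch out for is the coset-enumeration step: we must ensure that for presentations that do not define a group of order at most $n$, we can detect this and abort within the allowed budget; capping the number of cosets at $n$ and rejecting any presentation that would require more handles this. A secondary point is that two non-isomorphic groups may have isomorphic commuting graphs, so the enumeration may report a witness group different from the ``intended'' one, but this is acceptable since the statement only asks to decide realizability and produce some consistent labeling.
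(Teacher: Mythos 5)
Your overall architecture is exactly the paper's: enumerate all $2^{O(\log^3 n)}$ candidate short presentations guaranteed by Babai--Goodman--Kantor--Luks--P\'alfy, turn each surviving presentation into an explicit group of order $n$, form its commuting graph, and test isomorphism with the input via Babai's quasipolynomial graph isomorphism algorithm; the running-time accounting and the remark that a non-isomorphic witness group is acceptable are both fine.

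The one genuine gap is the step that converts a presentation into a multiplication table. You propose Todd--Coxeter coset enumeration over the trivial subgroup, ``capped at $n$ cosets, rejecting any presentation that would require more.'' That cap makes the procedure terminate, but it destroys completeness: Todd--Coxeter may need to define far more than $n$ cosets at intermediate stages before coincidences collapse them, even when the presented group has order exactly $n$. There is no bound on the number of intermediate cosets that is polynomial in the index and the presentation length (the undecidability of the triviality problem already rules out any computable bound in terms of the presentation length alone, and even for presentations known to define groups of order $n$ no polynomial bound is available). So a group $G$ of order $n$ could have all of its short presentations falsely rejected by your capped enumeration, and the algorithm would answer ``no'' on $\Gamma(G)$. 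The paper avoids this entirely by using the \emph{structure} of the short presentations rather than generic coset enumeration: the presentation is organized along a composition series $1=N_0\rhd N_1\rhd\cdots\rhd N_r=G$ with $r\le\log n$, each factor $N_i/N_{i-1}$ simple with an $O(\log^2 n_i)$-size presentation, and the multiplication table of $N_i$ is built inductively in polynomial time from the table of $N_{i-1}$, coset representatives of the generators of $N_i/N_{i-1}$, and the relators. To repair your argument you should either restrict the enumeration to presentations in this layered normal form and build the table layer by layer as the paper does, or supply a justification (which the standard theory of coset enumeration does not provide) that the specific presentations produced by the short-presentation theorem admit coset enumeration within $\mathrm{poly}(n)$ cosets.
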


\begin{proof}
Given such a generator-relator presentation $\angle{Y|R}$ we first
find the multiplication table of the group in polynomial time. More
precisely, let $G$ be a group of order $n$ with composition series:
\[
1=N_0\rhd N_1\rhd \cdots \rhd N_r=G,
\]
where $r\le \log n$. For each $i\ge 1$, the quotient group
$N_i/N_{i-1}$ is simple of some order $n_i$, where $\prod_{i=1}^r n_i
=n$.  By the above mentioned theorem of \cite{Bab}, each $N_i/N_{i-1}$
has an $O(\log^2 n_i)$ size generator-relator presentation
$\angle{Y_i|R_i}$. Inductively assume that we have computed the group
multiplication table for $N_{i-1}$. The generating set $Y_i$ for
$N_I/N_{i-1}$ is a collection of cosets $yN_{i-1}$. Combined with the
multiplication table for $N_{i-1}$, and using the relations in $R_i$,
we can compute the multiplication table for $N_i$ in polynomial time.

Continuing thus, we will have the multiplication table for the entire
group $G$ from which we can find its commuting graph $\Gamma(G)$.
Now, we can test if $\Gamma(G)$ is isomorphic to the input graph $X$
using Babai's $2^{O(\log^3n)}$ time algorithm.

Since each presentation $\angle{Y|R}$ is of size $O(\log^3n)$, we can
go through all of them in time $O(\log^3n)$, finding the commuting
graph $\Gamma(G)$ for the corresponding group $G$ and then running
Babai's isomorphism test to check if $\Gamma(G)\simeq X$.

Thus, the overall computation takes $2^{O(\log^3n)}$ time.
\end{proof}

\section{Isoclinism of groups and commuting graphs}


As extraspecial groups are a special case of nilpotent groups of class
$2$, a natural question is whether the algorithm of
Section~\ref{extrasp-sec} can be extended to efficiently recognize the
commuting graphs of nilpotent groups of class $2$.

The property of extraspecial groups that we exploited in the algorithm
is that extraspecial groups are \emph{isoclinic}. We briefly recall
the definition and its connection to commuting graphs \cite{GGsurvey}:

Clearly, two isomorphic groups have the same commuting graph (meaning
isomorphic commuting graphs). We can define an equivalence relation
among finite groups of order $n$: two groups are equivalent if they
have the same commuting graph.
 
 The \emph{commutator map} of $G$ is the map
\[
\kappa~: (Za,Zb)\mapsto aba^{-1}b^{-1}
\]
from the product $G/Z(G)\times G/Z(G)$ to the commutator subgroup
$G'$.

Two groups $G_1$ and $G_2$ are \emph{isoclinic} if $G_1/Z_1$ and
$G_2/Z_2$ are isomorphic, and their derived subgroups $G'_1$ and
$G'_2$ are isomorphic via isomorphisms that commute with the $\kappa$
map.

Suppose $G_1$ and $G_2$ are isoclinic groups such that their centers
$Z_1$ and $Z_2$ are of the same order. Then, first of all, the
commuting graphs of $G_1/Z_1$ and $G_2/Z_2$ are isomorphic because the
groups are isomorphic. The commuting graph of $G_i$ can be obtained
from the commuting graph of $G_i/Z_i$ by correctly blowing up each
coset vertex to a coset of vertices (and including the edges as
required: two vertex cosets are either fully connected with each other
or not connected at all). The isoclinism property ensures that the
commuting graphs of $G_1$ and $G_2$ remain isomorphic.

What about the converse? That is, if two groups have isomorphic commuting
graphs, must they be isoclinic? This holds for various classes of groups,
such as abelian groups, nonabelian simple groups, and extraspecial groups
(as we have seen).

For extraspecial groups the converse property was exploited in
obtaining the efficient recognition algorithm for their commuting
graphs. 

However, it is not true in general; we recycle an example taken from
\cite{CK} to show this. Let $G$ be the group of order~$64$ which is
\texttt{SmallGroup(64,182)} in the SmallGroups library in \textsf{GAP}
\cite{gap}. The Schur multiplier of $G$ has order $2$, so a Schur cover (a
group $H$ of maximal order subject to having a subgroup $Z\le H'\cap Z(H)$
such that $H/Z\cong G$) has order $128$. Moreover, the Bogomolov multiplier
of $G$ is equal to the Schur multiplier, which implies that the Schur covers
$H$ are \emph{commutation-preserving}: that is, two elements $a,b\in H$
commute if and only if their projections $Za,Zb\in G$ commute. This implies
that the commuting graph of a Schur cover is obtained from the commuting graph
of $G$ by replacing each vertex with a clique of size $2$, with all edges
between cliques corresponding to adjacent vertices. This procedure also
describes the commuting graph of $G\times C_2$. On the other hand, it is
easy to verify computationally that the derived groups of $H$ and $G\times C_2$
are not isomorphic, so these groups cannot be isoclinic.

We note that the group $G$ has nilpotency class $3$, as do all of its Schur
covers (these are \texttt{SmallGroup(128,$i$)} for
$i\in\{789, 790, 791, 815, 816, 817\}$ in the \textsf{GAP} library. So the
following question is still open:

\paragraph{Conjecture} Is it true that a nilpotent group of class~$2$ is
determined up to isoclinism by its commuting graph?



\newcommand{\etalchar}[1]{$^{#1}$}

\end{document}